\documentclass[a4paper,9pt]{amsart}

\usepackage{mathrsfs,amsmath} 

\usepackage{amssymb}
\usepackage{cases}
  \usepackage{paralist}
  \usepackage{graphics}
  \usepackage{epsfig} 
\usepackage{graphicx}  \usepackage{epstopdf}
 \usepackage[colorlinks=true]{hyperref}
\hypersetup{urlcolor=blue, citecolor=red}

\newtheorem{theorem}{Theorem}
\newtheorem*{theorem*}{Theorem}
\newtheorem{corollary}{Corollary}

\newtheorem{lemma}{Lemma}
\newtheorem*{lemma*}{Lemma}

\theoremstyle{definition}

\newtheorem{remark}{Remark}
\newtheorem{notation}{Notation}
\newcommand{\ep}{\varepsilon}

\newcommand{\lin}{{\mathrm{lin}}}
\newcommand{\Rn}{{\mathbb{R}^n}}

\newcommand{\xii}{|\xi|}
\newcommand{\R}{{\mathbb{R}}}
\newcommand{\N}{{\mathbb{N}}}
\newcommand{\supp}{{\mathrm{\,supp\,}}}
\DeclareMathOperator{\sinc}{sinc}
\renewcommand{\doteq}{{\,\mathrm{:=}\,}}

\title[On the two critical exponents for a dispersive equation with weak dissipation] {On the two critical exponents for a dispersive equation with weak dissipation}
      
\author[M. D'Abbicco, A. Lagioia]{Marcello D'Abbicco, Antonio Lagioia}

\address{Marcello D'Abbicco, Department of Mathematics, University of Bari, Via E. Orabona 4, 70125 Bari, Italy}

\address{Antonio Lagioia, International School for Advanced Studies (SISSA), Via Bonomea 265, 34136 Trieste, Italy}

\email{marcello.dabbicco@uniba.it, alagioia@sissa.it}

\keywords{dispersive equation, Double dispersion equation, dissipative waves, Global in time existence}

\begin{document}

\begin{abstract}
In this paper, we study a nonlinear Cauchy problem written in normal form
\begin{equation*}
\begin{cases}
u_{tt} + A u+ Bu_t =D|u|^\alpha, & t\geq 0, x \in \Rn\\
u(0,x)=u_0(x),\,u_t(0,x)=u_1(x).
\end{cases}
\end{equation*}
Here~ $Af=\mathscr{F}^{-1}(a(\xi)^2\mathscr{F}f)$, with $a(\xi)$ homogeneous of order $k > 0$ under the assumption that the Hessian matrix of $a(\xi)$ is nonsingular,  the operator $Bf=\mathscr{F}^{-1}(b(\xi)\mathscr{F}f)$ represents a weak dissipation, in the sense that $b(\xi) \sim 1$ as $|\xi| \to \infty$ and
$Df=\nu\ast (-\Delta)^\delta(I-\Delta)^{-\delta}$, with $\nu$ a bounded measure and $\delta \geq k/2$. We derive $L^p-L^q$ estimates of the linear solution operator to the considered model and we prove the transition from a Fujita-type exponent to a Strauss-type exponent for the global-in-time existence of small data solution for the corresponding nonlinear problems.
\end{abstract}

\maketitle
\section{Introduction}

In this paper, we are interested in studying the following nonlinear Cauchy problem
\begin{equation}\label{eq:CP:nonlinear}
\begin{cases}
u_{tt} + A u+ Bu_t  =Df(u), & t\geq 0, x \in \Rn\\
u(0,x)=u_0(x),\,u_t(0,x)=u_1(x),
\end{cases}
\end{equation}
where
\begin{equation}\label{eq:fu}
f(0)=0, \qquad |f(v)-f(u)|\leq C|v-u|(|v|^{\alpha-1}+|u|^{\alpha-1}),
\end{equation}
for some $\alpha>1$ (for instance, $f(u)=|u|^\alpha$).
We consider an operator $A$ defined by
\begin{equation}\label{eq:A}
Af = \mathscr{F}^{-1}(a^2(\xi)\,\hat f),\qquad f\in \mathcal S,
\end{equation}
and extended by density to other functional spaces, where we assume that $a\in\mathcal C^{n+1}(\R^n\setminus\{0\})\cap \mathcal C^{n+1,1}(S^{n-1})$, is homogeneous of degree~$k>0$ and it is an elliptic symbol, in the sense that $a(\xi)>0$ for any $\xi\neq0$.
By $\mathcal C^{n+1}(\R^n\setminus\{0\})\cap C^{n+1,1}(S^{n-1})$ for a homogeneous function, we mean that $a\in\mathcal C^{n+1}(\R^n\setminus\{0\})$ and that its derivatives of order $n+1$ are Lipschitz-continuous on $S^{n-1}$. This regularity will appear in the Littman-type Lemma that we use to derive $L^1-L^\infty$ estimates (see later, Lemma~\ref{lem:Littman}). On the other hand, we assume 
\begin{equation}\label{eq:B}
D=\nu\ast (-\Delta)^\delta(I-\Delta)^{-\delta}, \qquad Bf=\mathscr{F}^{-1} (b\hat f), \qquad f\in \mathcal S,
\end{equation}
where $\delta\geq k/2$, $\nu$ is a bounded measure and $b\in\mathcal C^{n+1}(\R^n\setminus\{0\})$ verifies: 
%
\begin{equation}\label{eq:hyp:b}
\begin{split}
&  b(\xi)\leq C,\qquad \liminf _{|\xi| \to 0} |\xi|^{-2\delta} b(\xi)>0, \qquad \liminf _{|\xi| \to +\infty} b(\xi)>0,\\
& |\partial_\xi^\alpha b(\xi)|\leq \xii^{-|\alpha|}\frac{\xii^{2\delta}}{1+\xii^{2\delta+\varepsilon}},\qquad |\alpha|=1,\ldots,n+1,
\end{split} 
\end{equation}
%
for some $\varepsilon>0$.
Here $(-\Delta)^\delta$ is the fractional Laplace operator and $(I-\Delta)^{-\delta}$ is the Bessel potential (see \textsection \ref{sec:not}).
Examples of interest of the operator $b$ include
\begin{align} 
\label{eq:assumption_b_3}
     &b(\xi) =\,\frac{|\xi|^{2\delta}}{(1+|\xi|^2)^{\delta}},\\
  \label{eq:assumption_b_1}  & b(\xi) =\,\frac{|\xi|^{2\delta}}{1+|\xi|^{2\delta}},\\
    \label{eq:assumption b 2}  & b(\xi)=\frac{|\xi|^{2\delta}+ |\xi|^{2\lambda}}{1+|\xi|^{2\lambda}},\\
  \label{eq:assumption b 4}    &   b(\xi)=\frac{|\xi|^{2\delta}+ |\xi|^{2\lambda}}{(1+|\xi|^2)^{\lambda}},
\end{align}
for~$k/2\leq \delta < \lambda$.
In case \eqref{eq:assumption_b_3} we have $\varepsilon=2$;  in case \eqref{eq:assumption_b_1} we have $\varepsilon=2\delta$; in case \eqref{eq:assumption b 2}  we have $\varepsilon=2(\lambda-\delta)$ and in case \eqref{eq:assumption b 4} we have $\varepsilon=\min\{2,2(\lambda-\delta)\}$, where $\varepsilon$ is as in \eqref{eq:hyp:b}. 

Examples of interest of $D$ include the case $\nu=\delta_0$, the Dirac measure, that is, $D=B$, where $B$ is as in \eqref{eq:assumption_b_3}, and
\begin{equation}\label{eq: D ex2}
D= \big( c_1(-\Delta)^{\lambda_1}+c_2(-\Delta)^{\lambda_2}\big) (I-\Delta)^{-\lambda_2}, \qquad \lambda_2>\lambda_1\geq\delta. 
\end{equation}
Indeed, in this case
\[ \hat\nu = \frac{c_1|\xi|^{2(\lambda_1-\delta)}+ c_2|\xi|^{2(\lambda_2-\delta)}}{(1+|\xi|^2)^{\lambda_2-\delta}}, \]
On the one hand,
\[ \frac{|\xi|^{2(\lambda_2-\delta)}}{(1+|\xi|^2)^{\lambda_2-\delta}}, \]
is the Fourier transform of a bounded measure (see Lemma 2 in V, \textsection 3.2 in~\cite{Stein70}). Then, it remains to show that
\[ \frac{|\xi|^{2(\lambda_1-\delta)}}{(1+|\xi|^2)^{\lambda_2-\delta}}, \]
is also the Fourier transform of a measure. We distinguish two cases. If~$\lambda_1>\delta$, then it is sufficient to apply Lemma~\ref{lem:HF} with $\varepsilon=2\min\{\lambda_1-\delta,\lambda_2-\lambda_1\}$. If $\lambda_1=\delta$, then it is the Fourier transform of the Bessel kernel.
Previous examples still hold if we replace all the Laplace operators $-\Delta$ by an operator
\begin{equation}\label{eq:G}
Gf = \mathscr{F}^{-1}(g^2(\xi)\,\hat f),\qquad f\in \mathcal S,
\end{equation}
where $g$ is homogeneous and $g(\xi)>0$ for any $\xi\neq0$. However, the application of different anisotropic operators is more complicated and yields counterexamples; indeed, if we consider the case $\delta=1$ and
\[\widehat{D}=\frac{|\xi|^2+\xi_2}{1+|\xi|^2}=\frac{|\xi|^2}{1+|\xi|^2}\,\frac{\xi_1^2+2\xi_2^2}{\xii^2},\]
we have that
\[ \frac{\xi_1^2+2\xi_2^2}{\xii^2} =  1 + \frac{\xi_2^2}{\xii^2} = \hat \delta_0 - \widehat{R_2^2\delta_0}, \]
where $R_2$ is the Riesz transform, is not the Fourier transform of a measure.\\

The basic example for $A$ is a differential homogeneous operator of order $2k$, $k\in\N$, with constant coefficients,
\begin{equation}\label{eq:elliptic}
A=\sum_{|\alpha|=2k} a_\alpha \partial_x^\alpha,\qquad \text{with}\quad a(\xi)^2=\sum_{|\alpha|=2k} a_\alpha \xi^\alpha >0, \quad \forall \xi\neq0.
\end{equation}
Then $a\in\mathcal C^\infty(\R^n\setminus\{0\})$ is homogeneous of degree $k$. The fractional Laplace operator $(-\Delta)^k$ also fits our assumptions on $a(\xi)$. 
We also assume that $\det H_a(\xi)\neq0$ for any $\xi\in S^{n-1}$, where $H_a$ denotes the Hessian matrix of~$a$ . As a consequence of this assumption, we shall exclude the case $k=1$ (see later, Remark \ref{rmk:k=1}). The basic example is $A=(-\Delta)^k$ once again, since $\det H_{\xii^k}\neq0$ if $k\neq1$, for $\xi\neq0$.
When $B=0$, model \eqref{eq:CP:nonlinear} is referred to as $k$-evolution equation. 

In the case of differential operators, the name $k$-evolution equation goes back to \cite{Miz61}; %
Results of well-posedness in Sobolev spaces which depend on lower order terms are obtained by several authors, see~\cite{CC2010,CHR2008,ABZ2012,ABZ2016}; the Gevrey well-posedness has been investigated in~\cite{AAC2024}. 

The dissipation considered in this paper does not fit in the class of lower order terms generally considered for the well-posedness of the problem; also, we are interested in the use of Fourier analysis to obtain decay estimates for the problem~\eqref{eq:CP:nonlinear}.
The hypothesis $\delta \geq k/2$ narrows the study to the damped oscillations regime, that this is the case in which the dissipation does not cancel the oscillations at both low and high frequencies.

Model \eqref{eq:CP:nonlinear} includes the double dispersion model (see \cite{Polat, Wang, DADE})
\begin{equation}\label{eq:DDDE}
\begin{cases}
u_{tt} + A u_{tt}+ Au + A^2u + \mu Au_t = Af(u), & t\geq 0, x \in \Rn,\\
u(0,x)=u_0(x),\,u_t(0,x)=u_1(x),
\end{cases}
\end{equation}
where $\mu \in \{0,1\}$. Model \eqref{eq:DDDE} was recently studied in \cite{DAL25b}, obtaining the existence of a global-in-time solution for $\alpha > \gamma(r)$, where $\gamma(r)$ is defined as the positive solution to the quadratic equation
\begin{equation}\label{eq:strauss-r}
     \frac1\gamma =\frac{r}2\,\frac{\gamma-1}{\gamma+1},
\end{equation}
if $\mu=0$, and to
\begin{equation}\label{eq:strauss DDDE}
 \frac 1{\gamma} = \frac{1}2 \left( \frac{n}{k}+\frac r2 \right) \, \frac{{ \gamma}-1}{{ \gamma}+1}+\frac12,
\end{equation}
if $\mu=1$, where $r$ is the rank of Hessian matrix of the symbol $a(\xi)$. The critical exponent $\gamma(n)$ as in \eqref{eq:strauss-r} is called Strauss exponent, and was conjectured to be the critical exponent for Klein-Gordon equation, Schr\"odinger equation in~\cite{CDW08, NakanishiOzawa=2002} and was modified into $\gamma(n-1)$ for the wave equation in~\cite{Strauss}. This was later proved in several papers (see~\cite{GLS} and the references therein).
While in the undamped double dispersion equation (\eqref{eq:DDDE} with $\mu=0$) the Strauss exponent appears naturally (see later, Remark \ref{rmk:trans}) due to the purely dispersive nature of the fundamental solution, in the damped model there is a competition between the dispersive and dissipative effects. When dissipation is stronger than dispersion, the approach that uses dispersive decay estimates ($L^{1+\frac 1\alpha}-L^{1+\alpha}$ as in \cite{DAL25b}) is replaced by the use of estimates of the type $L^1-L^\alpha$). The use of $L^1$ regularity yields a Fujita-type critical exponent, as is the case of the damped wave equation (see \cite{TY01}).

In particular, we are interested in determining the conditions on the space dimension $n$ and on the parameters $k$ and $\delta$ under which the exponent of global existence moves from a Fujita-type to a Strauss-type. The condition for the switch of the exponent is (see later, Remark \ref{rmk:trans})
\begin{equation}\label{eq:trans_cond}
    n\left(\frac M2+\delta \right)+M\leq  0, \qquad M=-k+4\delta^2-4\delta k +k^2.
\end{equation}
This is the consequence of the following two theorems. Let $\tilde{\gamma}$ be the quantity defined as
\begin{equation}\label{eq:strauss exp mod}
    \frac n2\left(\frac 1{\delta} +1-\frac k{2\delta}\right)\left(\frac {\tilde {\gamma}-1}{\tilde {\gamma}+1}\right)+ \left(1-\frac k{2\delta} \right)= \frac 1{\tilde {\gamma}},
\end{equation}
and
\begin{equation}\label{eq:fuj exp}
   p_F=1+\frac{\frac n2 \left(1-\frac k{2\delta}\right)+\frac k{2\delta}}{\frac n{2\delta}+1-\frac k{2\delta}+\frac n2 \left(1-\frac k{2\delta}\right)} .
\end{equation}.

For simplicity, we will consider the case $u_0=0$ in \eqref{eq:CP:nonlinear} in the following.

\begin{theorem}\label{thm:glob_ex}
Let $n \geq 1$ and assume that \eqref{eq:trans_cond} does not hold. Assume $1-2\delta/k\leq d(1+1/\alpha,1+\alpha) <1$ and
\begin{equation}\label{eq: strauss exp}
\tilde\gamma < \alpha < \min \left\lbrace \frac{\left(\frac 1k+\frac \delta k-\frac 12\right)n+1}{\left(\left(\frac 1k+\frac \delta k-\frac 12\right)n-1\right)_+}, \frac{\frac n2 +1}{\left(\frac n2-1\right)_+} \right\rbrace. 
\end{equation}
Then there is a constant $\varepsilon_0>0$ such that for any
\[u_1\in \mathcal{A} := L^{1+\frac 1 \alpha}\cap\dot H^{-2\delta, 1+\frac 1 \alpha},\]
with $ \|u_1\|_{\mathcal{A}}<\varepsilon_0$,
there exists a unique solution 
\begin{align*}
u \in L^\infty([0,\infty), L^{\alpha+1}), \end{align*}
to \eqref{eq:CP:nonlinear}, satisfying the following estimate
\begin{align}
&\|u(t,\cdot)\|_{L^{\alpha +1}} \leq C\varepsilon_0 (1+t)^{-\delta(\alpha)},
\end{align}
where
\begin{equation}\label{eq:delta_alpha:strauss}
  \delta(\alpha)<  \frac n2\left(\frac 1{\delta} +1-\frac k{2\delta}\right)\frac {\alpha-1}{\alpha+1}+ \left(1-\frac k{2\delta} \right),
\end{equation}
and the constant $C>0$ does not depend on the initial data.
\end{theorem}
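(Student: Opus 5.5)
We prove the result by a contraction argument in the Banach space
\[ X=\{u\in L^\infty([0,\infty),L^{\alpha+1}):\ \|u\|_X:=\sup_{t\geq0}(1+t)^{\delta(\alpha)}\|u(t,\cdot)\|_{L^{\alpha+1}}<\infty\}. \]
We write the solution in Duhamel form,
\[ u=E(t)\ast u_1+\int_0^t E(t-s)\ast Df(u(s))\,ds, \]
where $E(t)$ is the linear solution operator with $\hat E(t,\xi)$ solving $\hat E_{tt}+b\hat E_t+a^2\hat E=0$, $\hat E(0)=0$, $\hat E_t(0)=1$. We then show that the map $N:u\mapsto$ right-hand side sends a small ball of $X$ into itself and is a contraction there.

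\textbf{Linear estimates.} The first step is an $L^{1+1/\alpha}-L^{1+\alpha}$ estimate for $E(t)D$ and for $E(t)$ acting on $\mathcal A$. We split the frequencies by cut-offs into low ($|\xi|\leq 1$) and high ($|\xi|\geq1$).
\begin{itemize}
\item At low frequencies $b\sim|\xi|^{2\delta}$ with $2\delta\geq k$. The roots are $\lambda_\pm=-b/2\pm i\sqrt{a^2-b^2/4}$, which oscillate, and the amplitude gains $e^{-c|\xi|^{2\delta}t}$. We combine the Littman-type Lemma~\ref{lem:Littman}, which uses $\det H_a\neq0$ and gives dispersion of order $t^{-n/2}$ on dyadic shells $|\xi|\sim 2^{-j}$ rescaled by homogeneity, with the dissipative factor. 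Summing over dyadic pieces via a Littlewood--Paley/Brenner-type argument, we obtain an $L^1-L^\infty$ bound. The factor $1/a\sim|\xi|^{-k}$ and the smoothing $|\xi|^{2\delta}$ from $D$ (or from $\dot H^{-2\delta}$ for $u_1$) are balanced exactly by the assumption $\delta\geq k/2$.
\item We interpolate the $L^1-L^\infty$ bound with the $L^2$ bound, which gives decay $t^{-(1-k/(2\delta))}$ from $|\xi|^{2\delta-k}e^{-c|\xi|^{2\delta}t}$ measured in $\dot H^{-2\delta}$. The interpolation at $d(1+1/\alpha,1+\alpha)=(\alpha-1)/(\alpha+1)$ yields the rate
\[ \tfrac n2\left(\tfrac1\delta+1-\tfrac k{2\delta}\right)\tfrac{\alpha-1}{\alpha+1}+\left(1-\tfrac k{2\delta}\right) \]
appearing in \eqref{eq:delta_alpha:strauss}. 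The hypothesis $1-2\delta/k\leq d<1$ is what makes the low-frequency symbol integrable after interpolation.
\item At high frequencies $b\geq c>0$, so there is exponential decay $e^{-ct}$. The only loss is regularity, since $|\xi|^{-k}$ gives smoothing of order $k$. The conditions on $\alpha$ in \eqref{eq: strauss exp} (the two upper bounds) are exactly the Sobolev-embedding conditions $L^{1+1/\alpha}\to L^{1+\alpha}$ with gains of order $k$ and $2$ after scaling. We use Lemma~\ref{lem:HF} and the boundedness of $\nu$ to handle $D$ and the derivative bounds \eqref{eq:hyp:b} on $b$ as Mikhlin multipliers.
\end{itemize}
This gives $\|E(t)Dg\|_{L^{1+\alpha}}\lesssim(1+t)^{-\eta}\|g\|_{L^{1+1/\alpha}}$ with $\eta$ equal to the right-hand side of \eqref{eq:delta_alpha:strauss}, and the analogous bound for $u_1\in\mathcal A$.

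\textbf{Nonlinear step.} By \eqref{eq:fu} and Hölder, $\|f(u(s))\|_{L^{1+1/\alpha}}=\|u\|^{\alpha}_{L^{\alpha+1}}\lesssim(1+s)^{-\alpha\delta(\alpha)}\|u\|_X^\alpha$. Hence
\[ \|Nu(t)\|_{L^{\alpha+1}}\lesssim(1+t)^{-\eta}\|u_1\|_{\mathcal A}+\|u\|_X^\alpha\int_0^t(1+t-s)^{-\eta}(1+s)^{-\alpha\delta(\alpha)}ds. \]
We choose $\delta(\alpha)<\eta$ close to $\eta$. The integral is then bounded by $(1+t)^{-\delta(\alpha)}$ provided $\alpha\delta(\alpha)>1$ and $\eta\geq\delta(\alpha)$, or more precisely whenever $\min\{\eta,\alpha\delta(\alpha)\}\geq\delta(\alpha)$ with $\max>1$. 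The condition $\alpha\eta>1$ is equivalent to $\alpha>\tilde\gamma$ by definition \eqref{eq:strauss exp mod}, so $\delta(\alpha)$ can be chosen with $\alpha\delta(\alpha)>1$. If also $\eta>1$ we may even take $\delta(\alpha)=\eta$, but the strict inequality suffices.

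We also need $\eta<1$-type behaviour near $s=t$, that is, local integrability of $(t-s)^{-\eta}$ for small $t-s$. Here the high-frequency estimate must not be singular at $t=0$. This holds because the Sobolev conditions above give uniform boundedness as $t\to0$, not merely blow-up of order $t^{-\eta}$.

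Since \eqref{eq:trans_cond} fails, $\tilde\gamma$ dominates $p_F$, so this Strauss-type range is the relevant one and no $L^1$-based estimate is needed. The difference estimate $\|Nu-Nv\|_X\lesssim\|u-v\|_X(\|u\|_X^{\alpha-1}+\|v\|_X^{\alpha-1})$ follows in the same way, so for $\varepsilon_0$ small we obtain a contraction. This gives existence, uniqueness and the decay estimate.

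\textbf{Main obstacle.} The hard step is the low-frequency $L^{1+1/\alpha}-L^{1+\alpha}$ estimate that uses dispersion and dissipation together. The decay must come out as exactly $t^{-\eta}$ with no logarithmic loss. To get it, we apply the Littman lemma on dyadic shells after the scaling $\xi\mapsto t^{-1/(2\delta)}\xi$. The dissipative cut-off $e^{-c|\xi|^{2\delta}t}$ localises to $|\xi|\lesssim t^{-1/(2\delta)}$, where the phase $ta(\xi)\sim t^{1-k/(2\delta)}$ still oscillates. We also need the derivatives of $\sqrt{a^2-b^2/4}$ and of $b$ controlled by \eqref{eq:hyp:b} uniformly, so that the amplitude meets the $\mathcal C^{n+1}$ hypotheses of Lemma~\ref{lem:Littman}.
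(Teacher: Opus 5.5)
\textbf{High frequencies.} The high-frequency step does not work for the reason you give. There the symbol is $(1-\chi)\widehat{D}\,e^{-bt/2}\sin(wt)/w$. The factor $e^{\pm itw}$ is not a Mikhlin symbol, since each derivative costs $t|\xi|^{k-1}$. Nor is it bounded on $L^p$, $p\neq2$, without a loss of derivatives. So Sobolev embedding combined with Lemma~\ref{lem:HF} and Mikhlin bounds for $b$ does not give $L^{1+\frac1\alpha}\to L^{1+\alpha}$.

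The conclusion is still true if you put the oscillation on $L^2$. Write the operator as $J_{k/2}\circ m_t\circ J_{k/2}$, where $J_{k/2}$ maps $L^{1+\frac1\alpha}\to L^2$ and $L^2\to L^{1+\alpha}$ by Sobolev embedding, and $\|m_t\|_{M_2^2}=\|m_t\|_{L^\infty}\lesssim e^{-ct}$ because $|\sin(wt)/w|\lesssim\langle\xi\rangle^{-k}$ on $\supp(1-\chi)$. This needs $\frac nk\frac{\alpha-1}{\alpha+1}\leq1$, which is not one of the upper bounds in \eqref{eq: strauss exp}:
\begin{itemize}
\item the first bound is exactly $\eta<1$ (the paper needs $\delta(\alpha)<1$ to apply Lemma~\ref{lem}), and it implies $\frac nk\frac{\alpha-1}{\alpha+1}<1$ because $\delta\geq k/2$;
\item the second bound is used in the paper to integrate the short-time singularity $(t-\tau)^{-\frac n2\frac{\alpha-1}{\alpha+1}+\varepsilon}$ coming from Corollary~\ref{cor:hyp}.
\end{itemize}
Also, $d(1+\frac1\alpha,1+\alpha)=n(\frac1k-\frac12)\frac{\alpha-1}{\alpha+1}$, not $\frac{\alpha-1}{\alpha+1}$. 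The condition $d<1$ is not a low-frequency condition: in the paper it allows $s=d+\varepsilon<1$ in Corollary~\ref{cor:hyp}, i.e.\ no loss of derivatives on the data.

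\textbf{Low frequencies.} You must interpolate on each dyadic shell before summing, not sum the $L^1$--$L^\infty$ bounds first. Interpolating the shell bounds $2^{j(2\delta-k)}e^{-c2^{2\delta j}t}$ (in $M_2^2$) and $2^{j(n+2\delta-k)}e^{-c2^{2\delta j}t}(1+2^{jk}t)^{-n/2}$ (in $M_1^\infty$, up to a logarithm) gives $2^{j(n\theta+2\delta-k)}e^{-c2^{2\delta j}t}(1+2^{jk}t)^{-n\theta/2}$, with $\theta=\frac{\alpha-1}{\alpha+1}$. The hypothesis $d\geq1-\frac{2\delta}k$ is precisely $n\theta(1-\frac k2)+2\delta-k\geq0$. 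This makes the sum concentrate at $|\xi|\sim t^{-1/(2\delta)}$ and yields $t^{-\eta}$ up to a logarithm. The logarithm is harmless since $\delta(\alpha)<\eta$, so the exact rate you list as the main obstacle is not needed.

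Summing first requires the same condition at $\theta=1$, which for $k>2$ is strictly stronger. For example, with $n=3$, $k=4$, $\delta=3$, all hypotheses hold for $\tilde\gamma\approx1.69<\alpha<5$. But summing first gives the rate $\frac13+\frac{11}{12}\theta<\eta=\frac13+\theta$, and $\alpha(\frac13+\frac{11}{12}\theta)<1$ at $\alpha=1.7$, so the contraction fails there.

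\textbf{Comparison with the paper.} With these two corrections your route goes through and is genuinely different from the paper's. The paper does not split frequencies. It factors $|\xi|^{2\delta}\widehat{E}(t)$ into:
\begin{itemize}
\item the dissipative multiplier $|\xi|^{k(s+\frac{2\delta}k-1)}e^{-bt/2}\in M_p^p$ (Lemma~\ref{lem:high0});
\item the oscillatory multiplier $t|\xi|^{k(1-s)}\sinc(wt)$, whose $M_p^q$ norm is $t^{s-\frac nk(\frac1p-\frac1q)}$ by homogeneity and Lemma~\ref{lem:high} (Littman on high-frequency shells, Bernstein $M_1^1$ bound, interpolation, $s>d$).
\end{itemize}
The paper's approach gives Theorem~\ref{thm:hyp} for all $1\leq p\leq q\leq\infty$, which is needed for $p=1$ in Theorem~\ref{thm:glob_ex_fuj}. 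The price is an $\varepsilon$-loss and a singular short-time bound. Yours is tied to $1<p<2$ on the conjugate line, but it needs no high-frequency dispersive estimate and gives a kernel bounded as $t\to0$.
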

Notice that condition $d(1+1/\alpha,1+\alpha)<1$ is satisfied due to \eqref{eq: strauss exp}.
\begin{theorem}\label{thm:glob_ex_fuj}
Let $n \geq 1$. Assume \eqref{eq:trans_cond}, $1-(2\delta/k)\leq d(1,\alpha) <1$ and
\begin{equation}\label{eq: fuj exp}
p_F < \alpha < \min\left\lbrace1+\frac{\frac n2 \left(1-\frac k{2\delta}\right)+\frac k{2\delta}}{\left(\frac n{2\delta}-\frac k{2\delta}+\frac n2 \left(1-\frac k{2\delta}\right)\right)_+}, \frac n{(n-k)_+}, \frac {2n}{(n-2)_+} \right\rbrace .
\end{equation}
Then there is a constant $\varepsilon_0>0$ such that for any
\[u_1\in \mathcal{A} := L^{1}\cap\dot H^{-2\delta, 1},\]
with $ \|u_1\|_{\mathcal{A}}<\varepsilon_0$,
there exists a unique solution
\begin{align*}
u \in L^\infty([0,\infty),L^{\alpha}), \end{align*}
to \eqref{eq:CP:nonlinear}, satisfying the following estimate
\begin{align*}
&\|u(t,\cdot)\|_{L^{\alpha}} \leq C\varepsilon_0 (1+t)^{-\delta(\alpha)}, 
\end{align*}
where
\begin{equation}\label{eq:delta_alpha:fuj}
  \delta(\alpha)<\frac n{2\delta}  \left(1-\frac 1{\alpha} \right)-\frac n{\alpha}\left(1-\frac k{2\delta} \right) +\left( \frac n2+1 \right)\left(1-\frac k{2\delta} \right),
\end{equation}
and the constant $C>0$ does not depend on the initial data.
\end{theorem}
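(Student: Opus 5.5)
We will prove Theorem~\ref{thm:glob_ex_fuj} by a contraction argument in the weighted space
\[X=\{u\in L^\infty([0,\infty),L^\alpha):\ \|u\|_X=\sup_{t\geq0}(1+t)^{\delta(\alpha)}\|u(t,\cdot)\|_{L^\alpha}<\infty\},\]
with $\delta(\alpha)$ chosen strictly below the right-hand side of \eqref{eq:delta_alpha:fuj}. Write $E(t)$ for the solution operator of the linear problem with data $(0,u_1)$, so $\widehat{E(t)}=e^{-tb/2}\sin(t\sqrt{a^2-b^2/4})/\sqrt{a^2-b^2/4}$ in the oscillating regime guaranteed by $\delta\geq k/2$. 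By Duhamel, a solution is a fixed point of
\[Nu(t)=E(t)u_1+\int_0^t E(t-s)Df(u(s))\,ds .\]

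\textbf{Step 1: linear estimate.} We use the $L^p-L^q$ estimates of $E(t)$ derived earlier, at $p=1$, $q=\alpha$, applied to $u_1$ and to $E(t)D=E(t)\nu\ast(-\Delta)^\delta(I-\Delta)^{-\delta}$. We split frequencies with a cutoff. At low frequencies, $b\sim|\xi|^{2\delta}$ gives the diffusion-type decay $e^{-ct|\xi|^{2\delta}}$. The factor $|\xi|^{2\delta}/a$ coming from $D$ and the $\dot H^{-2\delta,1}$ norm of $u_1$ supplies the extra gain. Combining this with the Littman-type Lemma~\ref{lem:Littman}, which exploits $\det H_a\neq0$, we interpolate $L^1-L^\infty$ dispersive decay against $L^1-L^2$ (or $L^2-L^2$) decay. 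This yields a rate $(1+t)^{-\beta(\alpha)}$ whose exponent matches the right-hand side of \eqref{eq:delta_alpha:fuj}. At high frequencies, $b\gtrsim1$ gives exponential decay. The only restriction there is the loss of derivatives, which is controlled by $1-2\delta/k\leq d(1,\alpha)<1$ together with $\alpha<n/(n-k)_+$ and $\alpha<2n/(n-2)_+$; these conditions make the high-frequency multiplier $L^1-L^\alpha$ bounded. This gives
\[\|E(t)u_1\|_{L^\alpha}\lesssim(1+t)^{-\delta(\alpha)}\|u_1\|_{\mathcal A},\qquad \|E(t)Dg\|_{L^\alpha}\lesssim(1+t)^{-\delta(\alpha)}\|g\|_{L^1}.\]
The second bound uses that $\nu$ is a bounded measure, so convolution with $\nu$ is bounded on $L^\alpha$ and commutes with the multiplier.

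\textbf{Step 2: nonlinear estimate.} From \eqref{eq:fu}, $\|f(u(s))\|_{L^1}\lesssim\|u(s)\|_{L^\alpha}^\alpha\lesssim(1+s)^{-\alpha\delta(\alpha)}\|u\|_X^\alpha$. Hence
\[\|Nu(t)-E(t)u_1\|_{L^\alpha}\lesssim\|u\|_X^\alpha\int_0^t(1+t-s)^{-\delta(\alpha)}(1+s)^{-\alpha\delta(\alpha)}ds.\]
This is $\lesssim(1+t)^{-\delta(\alpha)}$ provided $\alpha\delta(\alpha)>1$ and $\delta(\alpha)\leq 1$. If $\delta(\alpha)>1$ the integral still decays at rate $\min\{\delta(\alpha),\alpha\delta(\alpha)\}=\delta(\alpha)$, so no problem arises. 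The condition $\alpha\delta(\alpha)>1$, with $\delta(\alpha)$ arbitrarily close to the bound in \eqref{eq:delta_alpha:fuj}, is exactly $\alpha>p_F$; this follows from a direct computation using \eqref{eq:fuj exp}. The upper bound $1+(\cdots)/(\cdots)_+$ in \eqref{eq: fuj exp} guarantees that we may keep $\delta(\alpha)$ compatible with the sign requirements, in particular the positivity of the exponent produced in Step 1.

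\textbf{Step 3: contraction.} The same estimate applied to $|f(u)-f(v)|\leq C|u-v|(|u|^{\alpha-1}+|v|^{\alpha-1})$, together with H\"older ($L^\alpha\cdot L^{\alpha/(\alpha-1)}$ into $L^1$), gives
\[\|Nu-Nv\|_X\lesssim\|u-v\|_X(\|u\|_X^{\alpha-1}+\|v\|_X^{\alpha-1}).\]
So $N$ is a contraction on a small ball of $X$ when $\|u_1\|_{\mathcal A}<\varepsilon_0$, which gives existence, uniqueness and the decay estimate.

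The role of \eqref{eq:trans_cond} is to ensure that the $L^1-L^\alpha$ route gives the better (Fujita-type) threshold compared with Theorem~\ref{thm:glob_ex}. It is not needed for the fixed-point argument itself, only for consistency of the exponent range.

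\textbf{Main obstacle.} The main obstacle is Step 1: getting the sharp low-frequency $L^1-L^\alpha$ decay rate, with exactly the exponent in \eqref{eq:delta_alpha:fuj}, from the competition between the dispersive phase $e^{\pm ita}$ and the damping $e^{-tb/2}$. I would first attempt a dyadic decomposition in $|\xi|$. On each ring $|\xi|\sim2^j$, Lemma~\ref{lem:Littman} gives decay $(t2^{jk})^{-n/2}$ and the damping gives $e^{-ct2^{2j\delta}}$. Summing over $j\leq0$ with the weight $2^{j(2\delta-k)}$ from $D$ and interpolating with the Plancherel bound then produces the stated exponent.
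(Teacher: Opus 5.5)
Your proposal is correct and is essentially the paper's proof. It uses the same weighted space $X$, H\"older's inequality into $L^1$, the linear $L^1$--$L^\alpha$ bounds already provided by Theorem~\ref{thm:hyp} and Corollary~\ref{cor:hyp}, and Lemma~\ref{lem} with $\alpha\delta(\alpha)>1\Leftrightarrow\alpha>p_F$; so your ``main obstacle'' is already settled by those results, and your low-frequency dyadic heuristic reproduces the same exponent.

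The only variation is near $\tau=t$. The paper keeps the singular factor $(t-\tau)^{-n(\frac12-\frac1\alpha)+\varepsilon}$ coming from Corollary~\ref{cor:hyp} and integrates it over $[t-1,t]$ using $\alpha<2n/(n-2)_+$. Your uniform kernel bound $(1+t-\tau)^{-\delta(\alpha)}$ for $t-\tau\le1$ instead follows from Theorem~\ref{thm:hyp} with $\ell=0$ applied to $Df(u)$ (recall $D$ is bounded on $L^1$), together with $d(1,\alpha)<1$ and $\alpha\le n/(n-k)_+$. So you have misattributed the roles of $\alpha<2n/(n-2)_+$ and of the first upper bound in \eqref{eq: fuj exp} (which is precisely $\delta(\alpha)<1$), but this does not affect the argument.
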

We also mention that a similar transition result is expected for other damped evolution equations with $A|u|^\alpha$ as a nonlinear term; see, for instance, \cite{DAEL26} for some damped Boussinesq equation.

The critical exponents are determined based on the decay estimates of the solution to the linear problem, namely
\begin{equation}\label{eq:CP:linear}
\begin{cases}
u_{tt} + A u+ Bu_t  =0, & t\geq 0, x \in \Rn,\\
u(0,x)=0,\,u_t(0,x)=u_1(x).
\end{cases}
\end{equation}
Estimates for solutions to the damped $k$-evolution equation of the form \eqref{eq:CP:linear} have recently been obtained in \cite{DAE25}, for various types of damping terms $B$. However, the case where the operator $B$ satisfies only \eqref{eq:hyp:b} is not included among the models studied in \cite{DAE25}, due to its high-frequency behavior. 

In the following, we use cut-off functions $\chi\in\mathcal C_c^\infty(\R^n)$, with $\chi=1$ in a neighborhood of the origin.
The main tool to prove full range $L^p-L^q$ for the solution to \eqref{eq:CP:linear}, is the following lemma. 

\begin{lemma}\label{lem:high0}
Let $s\geq0$, $t>0$ and~$q\in[1,\infty]$. Assume that $b\in\mathcal C^{n+1}(\R^n\setminus\{0\})$ satisfies hypothesis \eqref{eq:hyp:b}.
Then
\[ \chi\,\xii^{s\delta}\,e^{-b(\xi)t}\in M_q^q, \qquad (1-\chi)\,\,e^{-b(\xi)t}\in M_q^q,\]
and we may estimate
\begin{align}\label{eq:hestlow}
\|\chi\,\xii^{s\delta}\,e^{-b(\xi)t}\|_{M_q^q}
    & \leq C\,(1+t)^{-\frac{s}2},\\
\label{eq:hesthigh}
\|(1-\chi)\,\,e^{-b(\xi)t}\|_{M_q^q}
    & \leq C\,e^{-ct},
\end{align}
for some $C,c>0$, independent of $t$.
\end{lemma}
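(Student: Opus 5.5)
Since $M_q^q\subset M_\infty^\infty = M_1^1$ are the same for $q=1,\infty$ (the space of Fourier transforms of bounded measures), and $M_1^1\hookrightarrow M_q^q$ with $\|m\|_{M_q^q}\le \|m\|_{M_1^1}$ by interpolation, it suffices to show that both multipliers are Fourier transforms of $L^1$ functions, or bounded measures, with the stated bounds. For this I will use the standard criterion: if $m\in\mathcal C^{n+1}(\R^n\setminus\{0\})$ satisfies $|\partial^\alpha m(\xi)|\le C_0\,\xii^{-|\alpha|}$ for $|\alpha|\le n+1$ with suitable decay at infinity, then $\|\mathscr F^{-1}m\|_{L^1}\le C C_0$. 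Equivalently, I will split $\R^n_x$ into $|x|\le R$ and $|x|>R$, using $\|m\|_{L^2}$ and $\||x|^{n+1}\mathscr F^{-1}m\|_{L^2}$-type bounds, or a Littlewood--Paley decomposition with Bernstein estimates on each dyadic block.

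\textbf{Low frequencies.} By \eqref{eq:hyp:b}, near the origin $b(\xi)\ge c\,\xii^{2\delta}$. Moreover, by the derivative bound, $|\partial^\alpha b|\lesssim \xii^{2\delta-|\alpha|}$ for $|\xi|\le 1$. I will rescale with $\lambda=t^{-1/(2\delta)}$ for $t\ge1$ and set $m_t(\eta)=\chi(\lambda\eta)\,\lambda^{s\delta}|\eta|^{s\delta}e^{-b(\lambda\eta)t}$. Since the $M_1^1$ norm is dilation invariant, it suffices to show $\|\chi(\lambda\cdot)\,|\eta|^{s\delta}e^{-b(\lambda\eta)t}\|_{M_1^1}\le C$ uniformly in $t\ge1$; the prefactor $\lambda^{s\delta}=t^{-s/2}$ then gives \eqref{eq:hestlow}. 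The function $\beta_t(\eta):=b(\lambda\eta)t$ satisfies $\beta_t(\eta)\ge c|\eta|^{2\delta}$ and $|\partial^\alpha\beta_t(\eta)|\lesssim |\eta|^{2\delta-|\alpha|}$ on the support, uniformly in $t$. By the Faà di Bruno formula, $|\partial^\alpha e^{-\beta_t}|\lesssim |\eta|^{-|\alpha|}(1+|\eta|^{2\delta})^{|\alpha|}e^{-c|\eta|^{2\delta}}\lesssim|\eta|^{-|\alpha|}e^{-c'|\eta|^{2\delta}}$. Multiplying by $|\eta|^{s\delta}$ and by $\chi(\lambda\eta)$, whose derivatives are also bounded by $|\eta|^{-|\alpha|}$, yields a symbol obeying Mikhlin-type bounds with Gaussian-like decay. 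A dyadic decomposition $\sum_j\varphi(2^{-j}\eta)m(\eta)$ then gives each piece an $L^1$ kernel norm $\lesssim 2^{js\delta}e^{-c2^{2j\delta}}$ for $j\ge0$. For $j<0$ the bound is $\lesssim 2^{js\delta}$ when $s>0$, which is summable. For $t\le1$ the estimate is direct, since the symbol is compactly supported and smooth away from $0$ with the same bounds.

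\textbf{The case $s=0$ and high frequencies.} When $s=0$ the low-frequency dyadic sum at $j\to-\infty$ is not summable via Mikhlin-type bounds alone; this is the main obstacle. Here I will write $\chi e^{-b t}=\chi+\chi(e^{-bt}-1)$. The first term is in $M_1^1$. For the second, $|e^{-\beta}-1|\lesssim\min(1,|\eta|^{2\delta})$ after rescaling, which gives the extra factor $2^{2j\delta}$ needed for summability.

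For high frequencies I will use $b\ge c_0>0$ for $|\xi|\ge R_0$ and write $e^{-bt}=e^{-c_0t/2}\,e^{-(b-c_0/2)t}$. The difficulty is that $b$ need not tend to a constant, so $(1-\chi)e^{-bt}$ is not obviously a measure. However, \eqref{eq:hyp:b} gives $|\partial^\alpha b|\lesssim \xii^{-|\alpha|-\varepsilon}$ at infinity. Hence $\partial^\alpha e^{-(b-c_0/2)t}$ is bounded by $\xii^{-|\alpha|-\varepsilon}(1+t)^{|\alpha|}$, with the polynomial factor in $t$ absorbed by $e^{-c_0t/4}$. The dyadic blocks for $j\ge0$ of $(1-\chi)(e^{-(b-c_0/2)t}-\text{const})$ gain $2^{-j\varepsilon}$ and are summable. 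To get this I will use instead the decomposition $e^{-bt}=e^{-b_\infty t}$ plus a remainder. If $b$ has no limit, I will sum differences $m(2^{j+1}\cdot)$-type telescoping pieces, each with $L^1$ norm $\lesssim t\,2^{-j\varepsilon}$. The fallback is to check directly, via the $\varepsilon$-decay of derivatives, that $\partial^\alpha$ of the symbol lies in $L^2$ with the right weights, and to conclude with the Carlson--Beurling inequality. Either route gives \eqref{eq:hesthigh}.
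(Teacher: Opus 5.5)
\textbf{Verdict: the plan is right, but two steps fail as written.} Your plan is the paper's argument made self-contained. The paper gets \eqref{eq:hestlow} by citing Lemma~\ref{lem:LF} with $\theta=2\delta$, $\eta=s\delta$. It gets \eqref{eq:hesthigh} by applying Lemma~\ref{lem:HF} to $e^{ct}(1-\chi)e^{-tb}$, using \eqref{eq:hhigh}. Your rescaled dyadic/Bernstein argument for $s>0$ (including $t\le1$) is correct. You are also right that $s=0$ needs separate care: at $r=1$, Lemma~\ref{lem:LF} requires $\eta=s\delta>0$, so the paper's citation does not cover it.

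\textbf{First gap: the $s=0$ splitting.} In the rescaled variable your remainder is $\chi(\lambda\eta)\big(e^{-\beta_t(\eta)}-1\big)$, and it is close to $-1$ on the whole range $1\lesssim|\eta|\lesssim t^{1/(2\delta)}$. There $\min(1,|\eta|^{2\delta})=1$. Since the subtracted cutoff $\chi$ is $t$-independent in the original variable, there is no exponential decay on that range either. So about $\log t$ dyadic blocks contribute $O(1)$ each, and the blockwise sum gives only $O(\log t)$, not a uniform bound.

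The fix is to subtract the cutoff at the natural scale, $\tilde\chi(t^{1/(2\delta)}\xi)$, whose $M_1^1$ norm is independent of $t$ by dilation invariance. For $t$ large the remainder $\chi(\lambda\eta)e^{-\beta_t}-\tilde\chi(\eta)$ equals $e^{-\beta_t}-1$ for small $|\eta|$ and $\chi(\lambda\eta)e^{-\beta_t}$ for $|\eta|\gtrsim1$; bounded $t$ is handled directly. Near $0$ you then need the factor $|\eta|^{2\delta}$ on all derivatives, not only on $|e^{-\beta_t}-1|\le\beta_t$. Fa\`a di Bruno does give $|\partial^\alpha e^{-\beta_t}|\lesssim|\eta|^{2\delta-|\alpha|}$ for $|\eta|\le1$, because every term contains some $\partial^\gamma\beta_t$. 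Your bound $|\eta|^{-|\alpha|}(1+|\eta|^{2\delta})^{|\alpha|}$ throws this factor away.

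\textbf{Second gap: high frequencies.} The obstacle you describe is not there when $n\ge2$, and proving this is the missing step. \eqref{eq:hyp:b} with $|\alpha|=1$ gives $|\nabla b(\xi)|\lesssim\xii^{-1-\ep}$ for $\xii\ge1$. Integrate along rays and along arcs of length at most $\pi\xii$ on the sphere (connected for $n\ge2$). This yields a single constant $L=\lim_{\xii\to\infty}b>0$ with $|b(\xi)-L|\lesssim\xii^{-\ep}$.

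With this, your route $e^{-bt}=e^{-Lt}+\text{remainder}$ closes:
\begin{itemize}
\item $\|(1-\chi)e^{-Lt}\|_{M_1^1}\lesssim e^{-Lt}$.
\item With $c_0=\inf_{\supp(1-\chi)}b\le L$, one has $|e^{-bt}-e^{-Lt}|\le t\,e^{-c_0t}|b-L|$.
\item Combined with your derivative bounds, the dyadic blocks of $(1-\chi)(e^{-bt}-e^{-Lt})$ have kernel norms $\lesssim(1+t)^{n+1}e^{-c_0t}2^{-j\ep}$, which sum to $\lesssim e^{-ct}$.
\end{itemize}
This is exactly what Lemma~\ref{lem:HF} packages in the paper.

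\textbf{Why your fallbacks fail.} Carlson--Beurling requires the symbol to be in $L^2$, while $(1-\chi)e^{-bt}\to e^{-Lt}\neq0$. Moreover, $b$ genuinely lacks a limit only when $n=1$ and $b(+\infty)\neq b(-\infty)$. In that case $(1-\chi)e^{-bt}$ contains a nonzero multiple of the truncated Hilbert multiplier $(1-\chi)\,\mathrm{sign}\,\xi\notin M_1^1$. So no telescoping can help, and \eqref{eq:hesthigh} itself fails for $q\in\{1,\infty\}$. The statement tacitly needs $b(+\infty)=b(-\infty)$ when $n=1$, as holds for the radial examples. Note that at $n=1$, the $|\alpha|=n-1=0$ hypothesis of Lemma~\ref{lem:HF} asks $|m|\lesssim\xii^{-\ep}$ at infinity, which $e^{ct}(1-\chi)e^{-tb}$ does not satisfy.

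\textbf{Two slips in your opening.} The inclusion is $M_1^1=M_\infty^\infty\subset M_q^q$, not the reverse. Also, Mikhlin-type bounds alone never give $M_1^1$: $\chi(\xi)\xi_1/\xii$ is discontinuous at $0$, so it is not the Fourier transform of a measure. This is precisely why both subtractions above are needed.
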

In particular, for $s=0$, Lemma~\ref{lem:high0} gives
\begin{equation}\label{eq:hest}
\|e^{-b(\xi)t}\|_{M_q^q}\leq C.
\end{equation}

It is clear that 
\[ b(\xi) = \frac{a(\xi)^2}{2(1+a(\xi)^2)} = \frac12 -\frac1{2(1+a(\xi)^2)}\,. \]
verifies the previous assumptions with $\varepsilon=2k$, as it happens in \eqref{eq:DDDE} with $\mu=1$.
Finally, we state the theorem related to the decay estimates for the solution to \eqref{eq:CP:linear}.
It is useful to introduce the following quantity
\begin{equation}\label{eq: d(p,q)}
d(p,q)=\frac nk\left(\frac1p-\frac1q\right)+n\,\max\left\lbrace \left(\frac12-\frac1p\right),\left(\frac1q-\frac12\right)\right\rbrace.
\end{equation}
%
%
%
%
\begin{theorem}\label{thm:hyp}
Let $1\leq p\leq q\leq \infty$, $d(p,q)$ is as in~\eqref{eq: d(p,q)}, and $B$ as in~\eqref{eq:B}. Assume
\begin{equation}\label{eq:reg_fr}
s=s(n,k,p,q)>d(p,q).
\end{equation}
Fix $\ell\geq0$ and assume that $u_1\in H^{(s+\ell/k-1)_+,p}$, and let $u$ be the solution to \eqref{eq:CP:linear}. Then
\begin{equation}\label{eq:pert_fr}
\| (-\Delta)^{\frac{\ell}2} u(t,\cdot)\|_{L^q} \lesssim\,t^{1-\frac{n}k\left(\frac1p-\frac1q\right)- \frac\ell{k} +\left(s+ \frac{\ell}k-1\right)_+ } \,\langle t \rangle^{-\left(\frac \ell{2\delta}+ \frac k{2\delta}(s-1) \right)_+}\,\|u_1\|_{H^{k\left(s+\frac{\ell}k-1\right)_+,p}},
\end{equation}
for any $t>0$, with $C>0$ independent of $t$. Here 
$(\kappa)_+=\max\{\kappa,0\}$ for any $\kappa \in \R$.
\end{theorem}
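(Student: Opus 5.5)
We work in Fourier variables. The solution to \eqref{eq:CP:linear} is $\hat u(t,\xi)=K(t,\xi)\hat u_1(\xi)$ with
\[
K(t,\xi)=e^{-\frac{b(\xi)}2 t}\,\frac{\sin\bigl(t\sqrt{a(\xi)^2-b(\xi)^2/4}\bigr)}{\sqrt{a(\xi)^2-b(\xi)^2/4}} .
\]
Since $\delta\ge k/2$, the hypotheses \eqref{eq:hyp:b} give $b(\xi)\lesssim \xii^{2\delta}\ll a(\xi)$ near the origin. At high frequencies $b\le C$ while $a\to\infty$. Hence $\omega(\xi):=\sqrt{a^2-b^2/4}$ is real and comparable to $a(\xi)$ outside a compact annulus. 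On that annulus $K$ is smooth and damped by $e^{-ct}$ (because $b>0$ there), so it contributes an exponentially decaying $L^p$–$L^q$ bound.

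We write $\sin(t\omega)/\omega=(e^{it\omega}-e^{-it\omega})/(2i\omega)$ and factor the multiplier as
\[
e^{-\frac b2 t}\cdot \frac{e^{\pm it\omega}}{\omega}\,\xii^{\ell}.
\]
The first factor is handled by Lemma~\ref{lem:high0}, which puts it in $M_q^q$ uniformly, with $\chi\xii^{s\delta}e^{-bt/2}$ costing $(1+t)^{-s/2}$ at low frequency and $e^{-ct}$ at high frequency. The second factor is a dispersive oscillatory multiplier. To use Lemma~\ref{lem:Littman}, we replace $\omega$ by $a$ up to a symbol perturbation: at low frequencies $\omega-a=O(\xii^{4\delta-k})$, and at high frequencies $\omega-a=O(\xii^{-k})$. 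The difference $e^{it(\omega-a)}$ is then absorbed. At high frequency this needs no extra decay; at low frequency it needs $t\xii^{4\delta-k}\lesssim1$, which we pair with the damping factor $e^{-ct\xii^{2\delta}}$. We then decompose dyadically, $\xii\sim 2^j$. On each piece, Littman's lemma with the nondegenerate Hessian gives an $L^1\to L^\infty$ bound $\lesssim 2^{j(n-(k-1)\cdot 0)}(t2^{jk})^{-n/2}$. The $L^2$ bound is trivial. Riesz–Thorin interpolation, together with the Bernstein-type gain $2^{jn(1/p-1/q)}$, yields the factor $2^{jk(d(p,q)-s)}$ times $t^{-\frac nk(\frac1p-\frac1q)}$-type scaling. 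This is the source of the exponent $d(p,q)$ in \eqref{eq: d(p,q)}.

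At high frequency ($2^j\gtrsim1$) we sum the dyadic pieces using the regularity $u_1\in H^{k(s+\ell/k-1)_+,p}$. The factor $\xii^{\ell}/\omega\sim \xii^{\ell-k}$ combined with the loss $2^{jkd(p,q)}$ is summable precisely under $s>d(p,q)$, i.e.\ \eqref{eq:reg_fr}. The damping gives $e^{-ct}$, and the scaling in small $t$ produces the power $t^{1-\frac nk(\frac1p-\frac1q)-\frac\ell k+(s+\frac\ell k-1)_+}$.

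At low frequency ($2^j\lesssim1$) we rescale $\xi\mapsto t^{-1/k}\xi$ for short times. For large times we sum the pieces with weight $2^{j(\ell-k)}2^{jkd}$ against the damping. Lemma~\ref{lem:high0} turns each power $\xii^{\sigma}$ into $(1+t)^{-\sigma/(2\delta)}$, and this gives the extra factor $\langle t\rangle^{-(\frac{\ell}{2\delta}+\frac{k}{2\delta}(s-1))_+}$ after choosing the dyadic cutoff $2^{j}\sim t^{-1/(2\delta)}$.

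The main obstacle is the low-frequency oscillatory part. There the phase $t\omega$ has the perturbation $t(\omega-a)$, which is not uniformly a symbol in the dispersive scaling. We expect to handle it by applying Littman's lemma with phase $a$ only where $t\xii^{4\delta-k}\lesssim1$. On the remaining region we use the damping $e^{-ct\xii^{2\delta}}$ via Lemma~\ref{lem:high0}, estimating $e^{it(\omega-a)}$ crudely by Mikhlin bounds there, since the damping beats any polynomial loss in $t\xii^{2\delta}$.
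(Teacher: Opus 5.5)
Your architecture is the paper's: $e^{-bt/2}$ goes into $M_q^q$ via Lemma~\ref{lem:high0}, the oscillatory factor is estimated dyadically with Littman's lemma, and the pieces are summed using $s>d(p,q)$. The genuine gap is the interpolation step that is supposed to produce $d(p,q)$. Littman gives $M_1^\infty$ and $M_2^2$ is trivial, so Riesz--Thorin between them only yields the conjugate line $M_{p_0}^{p_0'}$, $p_0\le2$. The classical Bernstein inequality $\|P_jf\|_{L^q}\lesssim 2^{jn(\frac1p-\frac1q)}\|P_jf\|_{L^p}$ only raises the Lebesgue exponent. Composing these you can never end in $L^q$ with $q<2$, nor, dually, start from $L^p$ with $p>2$. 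So, for example, $p=q=1$ is out of reach, and these are exactly the cases where the term $n\max\{\frac12-\frac1p,\frac1q-\frac12\}$ in $d(p,q)$ is positive. What is missing is a third endpoint, an $M_1^1$ bound on each dyadic piece. Since $|\partial_\xi^\alpha(\psi_j e^{\pm ita})|\lesssim 2^{-j|\alpha|}(1+t2^{jk})^{|\alpha|}$, Theorem~\ref{Thm_Bernestein in} with $N=[n/2]+1$ gives $\|\psi_j e^{\pm ita}\|_{M_1^1}\lesssim (1+t2^{jk})^{n/2}$. Interpolating this with the conjugate-line bound gives $2^{jn(\frac1p-\frac1q)}(t2^{jk})^{n\max\{\frac12-\frac1p,\frac1q-\frac12\}}$, i.e.\ $2^{jkd(p,q)}$ at $t=1$; this is the role of \eqref{eq:M11 estimate} in the proof of Lemma~\ref{lem:high}. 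The Mikhlin bounds you invoke elsewhere cannot replace it: they give nothing at $p=1$ and a worse power of $t2^{jk}$ otherwise.

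There are two further points at low frequency. First, the splitting $\sin(t\omega)/\omega=(e^{it\omega}-e^{-it\omega})/(2i\omega)$ is harmless only where $t\xii^k\gtrsim1$. On a dyadic piece with $t2^{jk}\ll1$, each exponential costs $2^{j(\ell-k+n(\frac1p-\frac1q))}$ with no dispersive gain, and with $\ell=0$, $p=q$ the sum over $j\to-\infty$ diverges. So the region $\xii\lesssim t^{-1/k}$ must be treated at all times, not only for short times, as a single rescaled smooth bump using $\sin(t\omega)/\omega\approx t$. This is what the paper's change of variables $\xi t^{1/k}\mapsto\xi$ with the cut-off $\chi$ does.

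Second, your claim that small-time scaling produces the power $t^{1-\frac nk(\frac1p-\frac1q)-\frac\ell k+(s+\frac\ell k-1)_+}$ cannot be verified, because it is false once this exponent exceeds $1$. On $\xii\lesssim1$ the multiplier is $t\xii^\ell(1+O(t))$, so for $u_1\in\mathcal S$ with $\hat u_1\in\mathcal C_c^\infty$ supported there, the left-hand side is $\approx t$ as $t\to0$. For $s+\frac\ell k\geq1$ the claimed exponent is $s-\frac nk(\frac1p-\frac1q)$, which equals $2$ for $p=q=2$, $\ell=0$, $s=2$. The paper's proof shares this defect: for $s>1$ its factor $\xii^{-k(s-1)}\sinc(\cdot)$ is singular at $\xi=0$, and Lemma~\ref{lem:high} only covers $(1-\chi)$. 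So for $t\le1$ the time factor you can actually prove is never better than $t$.

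On the positive side, replacing $\omega$ by $a$ and putting $e^{it(\omega-a)}$ into the amplitude, with the damping used where $t\xii^{4\delta-k}\gtrsim1$, is an improvement. The Littman step then uses only the hypotheses on $a$, whereas the paper's uniform-in-$t$ nondegeneracy of the rescaled true phase is not really justified.
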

The following immediate consequence of Theorem~\ref{thm:hyp} is of interest for the application to treat the nonlinear perturbation.
\begin{corollary}\label{cor:hyp}
Let $1\leq p\leq q\leq \infty$, $d(p,q)$ is as in~\eqref{eq: d(p,q)}, and assume~\eqref{eq:reg_fr}. Let $B$ and $D$ as in~\eqref{eq:B}. Assume that $u_1\in H^{(s-1)_+,p}$, and let $u$ be the solution to \eqref{eq:CP:linear}. Then
\begin{equation}\label{eq:estimate}
\| D u(t,\cdot)\|_{L^q} \lesssim\,t^{1-\frac{n}k\left(\frac1p-\frac1q\right)- \frac{2\delta}{k} +\left(s+ \frac{2\delta}k-1\right)_+ } \,\langle t \rangle^{-\left(1+ \frac k{2\delta}(s-1) \right)_+}\,\|u_1\|_{H^{k\left(s-1\right)_+,p}},
\end{equation}
for any $t>0$, with $C>0$ independent of $t$.
\end{corollary}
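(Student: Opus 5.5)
The plan is to deduce Corollary~\ref{cor:hyp} from Theorem~\ref{thm:hyp} with $\ell=2\delta$, after absorbing the convolution with $\nu$ and the Bessel potential into a bounded multiplier.

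\emph{Step 1: factor $D$.} Write
\[
Du=\nu\ast(I-\Delta)^{-\delta}(-\Delta)^{\delta}u .
\]
Since $\nu$ is a bounded measure, Young's inequality gives $\|\nu\ast f\|_{L^q}\le\|\nu\|\,\|f\|_{L^q}$ for every $q\in[1,\infty]$. The Bessel potential $(I-\Delta)^{-\delta}$ has an integrable positive kernel of total mass $1$, so it is also bounded on every $L^q$ (see Stein, Ch.~V). Hence
\[
\|Du(t,\cdot)\|_{L^q}\lesssim\|(-\Delta)^{\delta}u(t,\cdot)\|_{L^q}.
\]

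\emph{Step 2: apply Theorem~\ref{thm:hyp}.} Take $\ell=2\delta$, so $(-\Delta)^{\ell/2}=(-\Delta)^\delta$. The time factor of \eqref{eq:pert_fr} becomes
\[
t^{1-\frac nk(\frac1p-\frac1q)-\frac{2\delta}k+(s+\frac{2\delta}k-1)_+}\,\langle t\rangle^{-(1+\frac k{2\delta}(s-1))_+},
\]
because $\frac{\ell}{2\delta}=1$. This is exactly the factor in \eqref{eq:estimate}.

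\emph{Step 3: lower the regularity of the data.} Theorem~\ref{thm:hyp} with $\ell=2\delta$ asks for $u_1\in H^{k(s+2\delta/k-1)_+,p}$, whereas the corollary only assumes $H^{k(s-1)_+,p}$. The extra $2\delta$ derivatives must come from $D$ itself: the symbol $\xii^{2\delta}(1+\xii^2)^{-\delta}$ is bounded at high frequencies, so $D$ is of order zero there and there is no derivative loss. I see two ways to make this precise.

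\emph{Route (a), splitting frequencies.} Use the cut-off $\chi$. On the low-frequency piece $\chi(D)u_1$, any Sobolev norm is controlled by the $L^p$ norm, so Theorem~\ref{thm:hyp} applies with $\ell=2\delta$ and yields the stated decay. On the high-frequency piece, apply Theorem~\ref{thm:hyp} with $\ell=0$ to $(1-\chi)u_1$. The needed regularity is then $(s-1)_+$, and the multiplier $(1-\chi)\xii^{2\delta}(1+\xii^2)^{-\delta}$ is $L^q$-bounded by Mikhlin-type arguments or Lemma~\ref{lem:HF}. One must then check that the $\ell=0$ time factor is dominated by the claimed one. For large $t$ this holds because the high-frequency part carries the factor $e^{-ct}$ from \eqref{eq:hesthigh}. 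For small $t$ the powers of $t$ must match; this should follow since $(s+\frac{2\delta}k-1)_+-\frac{2\delta}k\ge(s-1)_+$ up to the stated normalization. This comparison of exponents is the delicate point.

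\emph{Route (b), reading $H$ as the inhomogeneous Bessel space.} If $H^{\sigma,p}$ means the inhomogeneous Bessel space, an alternative is to write $(-\Delta)^\delta u=(-\Delta)^\delta(I-\Delta)^{-\delta}\,(I-\Delta)^\delta u$ and reuse the proof of Theorem~\ref{thm:hyp}. The extra factor then only modifies the multiplier, through the bounded, decaying-at-zero factor $\xii^{2\delta}(1+\xii^2)^{-\delta}$. This factor produces the $\langle t\rangle$ decay by Lemma~\ref{lem:high0} at low frequencies and costs nothing at high frequencies.

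\emph{Expected obstacle.} The main difficulty is the exponent bookkeeping in Step 3: checking that the short-time powers of $t$ coming from the high-frequency part are consistent with \eqref{eq:estimate} while only $H^{k(s-1)_+,p}$ regularity is used.
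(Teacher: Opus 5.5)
Your Step~1 is where the argument goes wrong. By treating $(I-\Delta)^{-\delta}$ as nothing more than an $L^q$-bounded operator, you discard exactly the $2\delta$ derivatives of smoothing that you then find missing in Step~3. The missing observation is that every operator here is a Fourier multiplier commuting with $E(t)$. Hence $(I-\Delta)^{-\delta}u(t,\cdot)=E(t)\ast J_{2\delta}u_1$ is simply the solution with datum $J_{2\delta}u_1$.

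This is the paper's proof:
\begin{itemize}
\item write $Du=\nu\ast(-\Delta)^{\delta}\,(E(t)\ast J_{2\delta}u_1)$;
\item apply Theorem~\ref{thm:hyp} with $\ell=2\delta$ to the datum $J_{2\delta}u_1$, which gives exactly the time factor in \eqref{eq:estimate} multiplied by $\|J_{2\delta}u_1\|_{H^{k(s+2\delta/k-1)_{+},p}}$;
\item note that $k(s+\frac{2\delta}k-1)_{+}\le k(s-1)_{+}+2\delta$, that $J_{2\delta}$ is an isometry $H^{\sigma,p}\to H^{\sigma+2\delta,p}$, and that Bessel potentials are bounded on every $L^p$, $1\le p\le\infty$, so $\|J_{2\delta}u_1\|_{H^{k(s+2\delta/k-1)_{+},p}}\lesssim\|u_1\|_{H^{k(s-1)_{+},p}}$;
\item conclude with the boundedness of $\nu\ast$ on $L^q$.
\end{itemize}
Theorem~\ref{thm:hyp} is used as a black box, and no frequency splitting or comparison of powers of $t$ is needed.

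Your route~(a) can be completed, but not as written.
\begin{itemize}
\item For large $t$, Theorem~\ref{thm:hyp} with $\ell=0$ is not enough as stated. Its large-time power of $t$ exceeds the one in \eqref{eq:estimate} by at least one (exactly one when $s\ge1$). So you must reopen its proof and insert \eqref{eq:hesthigh} for the datum $(1-\chi)u_1$ to gain the factor $e^{-ct}$; the theorem's statement alone does not provide it.
\item For $t\le1$ the inequality you need is $(s+\frac{2\delta}k-1)_{+}-\frac{2\delta}k\le(s-1)_{+}$. This holds for every $s$, and it is the same inequality used above. The one you wrote is reversed and fails for $s<1$.
\end{itemize}
Route~(b), as written, places $(I-\Delta)^{\delta}$ on $u$, so it costs $2\delta$ derivatives instead of gaining them. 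The useful factorization is the one that puts $(I-\Delta)^{-\delta}$ on the datum.
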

\begin{proof}
We first apply Theorem~\ref{thm:hyp} with $\ell=2\delta$. The thesis follows, due to the fact that $J_{2\delta}$ maps $H^{k\left(s+\frac{2\delta}k-1\right)_+,p}$ into $H^{k\left(s-1\right)_+,p}$ and that $\nu\ast$ is a bounded operator on $L^p$ for any $p\in[1,\infty]$ (see~\cite{Ho}).

\end{proof}

\begin{remark}\label{rmk:trans}
    Theorem \ref{thm:hyp} explains the reason behind the condition \eqref{eq:trans_cond}. We consider the estimate \eqref{eq:pert_fr} with $\ell=2\delta$, $q=p\alpha\leq p'$ and $s=d(p,q)+\varepsilon$, for some small enough $\varepsilon>0$. Moreover, we assume
\begin{equation}\label{eq:ass1}
    s \geq 1-\frac{2\delta}k.
\end{equation}
We obtain from \eqref{eq:pert_fr} that the decay rate for $t\geq 1$ is
\begin{align*}
&-(1-s)\left(1-\frac k{2\delta} \right)-\frac nk  \left(\frac 1p-\frac 1{p\alpha} \right) 
\\ &\qquad = -\frac n{2\delta}  \left(\frac 1p-\frac 1{p\alpha} \right) +n\left(1-\frac k{2\delta} \right)  \left(\frac 1{p\alpha}-\frac 12 \right) -\left(1-\frac k{2\delta} \right) +\ep \left(1-\frac k{2\delta} \right) \\
& \qquad=\frac{1}{p}\left( -\frac n{2\delta}  \left(1-\frac 1{\alpha} \right)+\frac n{\alpha}\left(1-\frac k{2\delta} \right) \right) -\left( \frac n2+1 \right)\left(1-\frac k{2\delta} \right)  +\ep \left(1-\frac k{2\delta} \right) 
\end{align*}
We want to determine the exponent $p \in [1,1+1/\alpha]$ that maximizes

\[\frac{1}{p}\left( \frac n{2\delta}  \left(\alpha-1\right)-n\left(1-\frac k{2\delta} \right) \right) +\alpha \left( \frac n2+1 \right)\left(1-\frac k{2\delta} \right)  -\alpha\ep \left(1-\frac k{2\delta} \right).\]
We have to use $p=1$ if $a-1\geq 2\delta-k$. Then, setting $\alpha=1+2\delta-k$, the inequality $p> p_F$, where $p_F$ is as \eqref{eq:fuj exp} is satisfied if and only if condition \eqref{eq:trans_cond} holds.
\end{remark}

So the global existence exponent is
 \[\min\{\tilde \gamma, p_F\},\]
 that is we have a continuous transition from the Fujita exponent, when  $n\left(\frac M2+\delta \right)+M\leq  0$ to the Strauss one, when $ n\left(\frac M2+\delta \right)+M>  0$.
 By the term transition, we mean that, if we fix $n$ and $k$ (or $k$ and $\delta$), we move from the Fujita exponent for $\delta$ small enough ($n$ respectively) to the Strauss type exponent for large $\delta$ ($n$ respectively).

On the other hand, considering the undamped problem, as showed in \cite{DAL25b}, the decay rate of the solution is
\[ -\frac{n}k\left(\frac1p-\frac1q\right)+d(p,q)=n\,\max\left\lbrace \frac12-\frac1p,\frac1q-\frac12\right\rbrace, \]
so that $L^{1+\frac1\alpha}-L^{1+\alpha}$ estimate is always the best one. 

\begin{remark} \label{rmk:k=1}
    In the case $k=1$, the Hessian $H_a$ is necessarily degenerate, since $a$ is homogeneous; the maximal rank assumption for it means that the rank of $H_a$ is $n-1$. In this assumption, following as in~\cite{SteinHarmonic}, p. 428, it is likely possible to extend the result with $B=0$ in \eqref{eq:CP:linear}, upon replacing $d(p,q)$ in~\eqref{eq: d(p,q)} by
\begin{equation}\label{eq: d'(p,q)}
d'(p,q)=n\left(\frac1p-\frac1q\right)+(n-1)\,\max\left\lbrace \left(\frac12-\frac1p\right),\left(\frac1q-\frac12\right)\right\rbrace.
\end{equation}
The extension of Theorem~\ref{thm:hyp}, however, does not look equally straightforward, so we postpone this study to future investigations.
\end{remark}

The plan of the paper is the following:

In~\textsection\ref{sec:not}, we introduce the notations that we will use in the paper.

In~\textsection\ref{sec:lin est}, we prove the linear decay estimates stated in Theorem \ref{thm:hyp} and Lemma \ref{lem:high0}.

In~\textsection\ref{sec:nonlinear}, we prove Theorems \ref{thm:glob_ex} and \ref{thm:glob_ex_fuj}.

In~\textsection\ref{sec:App}, we collect results of the Fourier analysis and provide a proof of Littman lemma with less stringent requirements on the regularity of the phase and amplitude functions.

\subsection{Notation}\label{sec:not}

In this paper, we use the following notation.
\begin{notation}
For any multi-index $\alpha\in\N^n$, we use the notation
\[ \partial_x^\alpha= \partial_{x_1}^{\alpha_1}\ldots\partial_{x_n}^{\alpha_n}, \qquad \xi^\alpha = \xi_1^{\alpha_1}\ldots \xi_n^{\alpha_n}, \qquad |\alpha|=\alpha_1+\ldots+\alpha_n. \]
\end{notation}

\begin{notation}\label{not:F}
Let~$\widehat{f} = \mathscr{F}f$ be the Fourier transform, with respect to the~$x$ variable, of a tempered distribution $f$.  When $f\in L^1(\R^n)$, then
\[ \hat f(\xi)=\int_{\R^n} e^{-ix\xi}\,f(x)\,dx, \quad \text{where $x\xi$ denotes the inner product $\sum_j x_j\xi_j$ in $\R^n$.} \]
If $f\in L^1(\R^n)$, then~$\mathscr{F}^{-1}f(x)=(2\pi)^{-n}\mathscr{F}f(-x)$.
\end{notation}
\begin{notation}
Fractional powers of $-\Delta$ may be defined by their symbol in Fourier transform $(-\Delta)^s f=\mathscr{F}^{-1}(\xii^{2s}\mathscr{F}f)$ for $f \in H^s$ and~$s\in (0,\infty)$.
\end{notation}

\begin{notation}\label{not:Hsp}
For any~$p \in [1,\infty]$, and for~$s\in [0,\infty)$, $H^{s,p}$ denotes the Bessel potential space
\begin{align*}
H^{s,p}=J_s(L^p(\Rn), \qquad \text{ with }\, J_s(\varphi)=\mathscr{F}^{-1}((1+|\xi|^2)^{-\frac s2} \widehat{\varphi}),
\end{align*}
For any $f\in H^{sk,p}$, we use the following notation
\[ \begin{split}
\|f\|_{H^{s,p}}&=\|\mathscr{F}^{-1}((1+\xii^2)^{\frac{s}2}\,\hat f)\|_{L^p},\\
\|f\|_{\dot H^{s,p}}&=\|\mathscr{F}^{-1}(\xii^{s}\,\hat f)\|_{L^p}.
\end{split} \]
Then, for any $s\geq0$ (see~\cite{Stein70})
\[ \|f\|_{H^{s,p}} \approx \|f\|_{L^p}+\|f\|_{\dot H^{s,p}}. \]
\end{notation}
When~$m\in\mathbb{N}$ and $p\in(1,\infty)$,
\[ H^{m,p}=W^{m,p}=\{u\in L^p: \ \partial_x^\alpha u \in L^p:\,|\alpha|\leq m\},\]
the usual Sobolev space of order~$m$, and $\|\cdot\|_{H^{m,p}}$ is equivalent to $\|\cdot\|_{W^{m,p}}$ (see Theorem 3 in V, \textsection 3.3 in~\cite{Stein70}). If $m$ is an even positive integer, then $W^{m,p}\hookrightarrow H^{m,p}$ when $p=1,\infty$ (see V, \textsection 6.6 in~\cite{Stein70}).
\begin{notation}[see~\cite{Ho}]\label{not:Mpq}
Let $1\leq p\leq q\leq\infty$. Then $L_p^q=L_p^q(\mathbb{R}^n)$ denotes the space of tempered distributions $T\in \mathcal S'$ such that~$T\ast f\in L^q$ for any~$f\in \mathcal S$, and
\[ \| T \ast f\|_{L^q} \leq C \|f\|_{L^p}, \]
for all $f\in\mathcal S$ with a constant $C$, which is independent of $f$. In this case, the operator~$T\ast$ is extended by density from~$\mathcal S$ to~$L^p$ when $p\in[1,\infty)$. 
The notation $M_p^q=M_p^q(\mathbb{R}^n)=\mathscr{F}(L_p^q)$ denotes the set of Fourier transforms $\hat{T}$ of distributions $T \in L_p^q$, equipped with the norm  
\[ \|m\|_{M_p^q}=\sup\big\{\|\mathscr{F}^{-1}(m\mathscr{F}(f))\|_{L^q}:f\in \mathcal{S}, \|f\|_{L^p}=1\big\}.\]
For $p=\infty$, $L_\infty^\infty$ is the space of bounded measures and $M_\infty^\infty$ is the space of their Fourier transforms. The elements in $M_p^q$ are called  multipliers of type $(p,q)$.
\end{notation}
\begin{notation}\label{not:CN1}
Let $K\subset\R^n$, compact, $v\in\mathcal C^N(K)$, and $w\in\mathcal C^{N,1}(K)$, that is, the derivatives of order $N$ of $w$ are Lipschitz-continuous. We use the notation
\[ \begin{split}
\|v\|_{\mathcal C^N(K)}
    & = \max_{\stackrel{x\in K}{|\alpha|\leq N}} |\partial_x^\alpha v(x)|, \\
\|w\|_{\mathcal C^{N,1}(K)}
    & = \|w\|_{\mathcal C^N(K)}+ \max_{|\alpha|=N}\,\sup_{\stackrel{x,y\in K}{x\neq y}} \frac{|\partial_x^\alpha(w(x)-w(y))|}{|x-y|}\,.
\end{split} \]
\end{notation}
\begin{notation}
Let $f$ be a scalar, vector-valued, or matrix-valued function; then $\nabla f$ denotes its gradient vector, matrix or tensor and $H_f$ denotes its Hessian matrix or tensor. In particular, for scalar functions $f$, $\nabla H_f=(\partial_{x_i}\partial_{x_j}\partial_{x_k}f)_{i,j,k}$. We put $|\nabla f|=\max_j|\partial_{x_j}f|$, $|H_f|=\max_{j,k}|\partial_{x_j}\partial_{x_k}f|$, $|\nabla H_f|=\max_{i,j,k}|\partial_{x_i}\partial_{x_j}\partial_{x_k}f|$, \ldots
\end{notation}

\section{Proof of Theorem \ref{thm:hyp}} \label{sec:lin est}
In this section we want to provide $L^p-L^q$ estimates for $1\leq p\leq q\leq \infty$ the solution of model \eqref{eq:CP:linear}. 
After applying the Fourier transform to \eqref{eq:CP:linear} in space variable only, we may write
\begin{equation*}
\begin{cases}
\widehat{u}_{tt} + a(\xi)^2\widehat{u}+ b(\xi)\widehat{u}_t  =0, & t\geq 0,\, x \in \Rn,\\
\widehat{u}(0,x)=0,\,\widehat{u}_t(0,x)=\widehat{u_1}(x),
\end{cases}
\end{equation*}
Then the characteristic roots of the full symbol are
\begin{equation}\label{eq:char roots}
\lambda_{\pm}(\xi)= 
-\frac{b(\xi)}{2} \pm i w(\xi),
\end{equation}
where
\begin{equation}\label{eq:phase func}
w(\xi)=a(\xi)\sqrt{ 1-4 b^2(\xi)/ a^2(\xi)},
\end{equation}
due to the assumption $\delta \geq k/2.$
Let $E(t)$ be the fundamental solution to~\eqref{eq:CP:linear}, that is, the solution to
\begin{equation}\label{eq:K}
\begin{cases}
E_{tt}+ A E + BE_t=0, & t\geq 0,\\
E(0)=0,\,E_t(0)=\delta_0.
\end{cases}
\end{equation}
Thus, we may write the fundamental solution as
\begin{equation}\label{eq:fund sol}
\widehat{E}(t,\cdot)= \mathscr{F}^{-1}\left(e^{-\frac{b(\xi)}2t} \sinc (w(\xi) t)\right).
\end{equation}
Consider the multiplier
\begin{equation}\label{eq:general}
m(t,\xi)=\xii^{-sk}\, g(t,\xi)\,e^{\pm i w(t,\xi)},
\end{equation}
where $g(t,\cdot)\in\mathcal C^{n}(\R^n\setminus\{0\})$ and $w(t,\cdot)\in\mathcal C^{n+1}(\R^n\setminus\{0\})$ satisfy the following assumptions
\begin{equation}\label{eq:assumptions}\begin{split}
|\partial_\xi^\alpha g(t,\xi)|
    & \leq C\,\xii^{-|\alpha|}, \qquad |\alpha|=0,\ldots,n,\\
|\partial_\xi^\alpha w(t,\xi)|
    & \leq C\,\xii^{k-|\alpha|}, \qquad |\alpha|=0,\ldots,n+1,\\
w(t,\xi)
    & =a(\xi)+\textit{o}(\xii^k), \qquad \text{as~$\xii\to\infty$,}
\end{split} \end{equation}
where $a\in\mathcal C^{n+1}(\R^n\setminus\{0\})\cap \mathcal C^{n+1,1}(S^{n-1})$ is homogeneous of degree $k$ ($k>0$ with $k\neq1$) and $\det H_{a(\xi)}\neq0$, for any $\xi\neq0$.

\begin{lemma}\label{lem:high}
Assume~\eqref{eq:assumptions}. Let~$1\leq p\leq q\leq \infty$ and $s>-n/2$. Assume that $d(p,q)<s$. Then there exists~$C>0$, independent of~$t$, such that
\[ \|(1-\chi)\,\xii^{-sk}\,g(t,\xi)\,e^{\pm iw(t,\xi)} \|_{M_p^q} \leq C,\]
for any~$t\geq0$.
\end{lemma}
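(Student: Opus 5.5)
The plan is the standard dyadic-decomposition and interpolation argument for oscillatory multipliers with nondegenerate phase, as in the classical Brenner and Sjöstrand approach adapted to order-$k$ symbols. Take a Littlewood--Paley partition $\sum_{j\ge 0}\phi(2^{-j}\xi)=1$ on $\supp(1-\chi)$, with $\phi$ supported in $\{1/2\le\xii\le2\}$. Write $m_j(t,\xi)=\phi(2^{-j}\xi)(1-\chi)\xii^{-sk}g(t,\xi)e^{\pm iw(t,\xi)}$. It suffices to prove the three endpoint bounds
\[
\|m_j\|_{M_2^2}\lesssim 2^{-jsk},\qquad
\|m_j\|_{M_1^1}=\|m_j\|_{M_\infty^\infty}\lesssim 2^{jk(\frac n2-s)},\qquad
\|m_j\|_{M_1^\infty}\lesssim 2^{j(n-sk)-jk\frac n2}.
\]
From these, Riesz--Thorin interpolation between the three vertices $(\tfrac12,\tfrac12)$, $(1,1)$ (or $(0,0)$) and $(1,0)$ gives $\|m_j\|_{M_p^q}\lesssim 2^{jk(d(p,q)-s)}$ for every $1\le p\le q\le\infty$. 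Summing over $j$ converges because $d(p,q)<s$, with constants uniform in $t$.

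\textbf{The $L^2$ bound.} This is immediate from $|g|\le C$ and $|e^{\pm iw}|=1$.

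\textbf{The $L^1$ bound.} Rescale by $\xi=2^j\eta$. Then $\tilde m_j(\eta)=m_j(2^j\eta)$ has the same $M_1^1$ norm, and $\tilde m_j=2^{-jsk}\psi(\eta)e^{\pm i 2^{jk}\tilde w_j(\eta)}$. Here $\tilde w_j(\eta)=2^{-jk}w(t,2^j\eta)$, and by \eqref{eq:assumptions} it satisfies $|\partial^\alpha\tilde w_j|\le C$ on $\{1/2\le|\eta|\le2\}$. Each $\eta$-derivative of the exponential costs at most $2^{jk}$. The Bernstein-type estimate $\|\mathscr F^{-1}h\|_{L^1}\lesssim\|h\|_{L^2}^{1-\frac n{2N}}\|\nabla^N h\|_{L^2}^{\frac n{2N}}$ with $N>n/2$ (needing about $n/2+1\le n$ derivatives, which is available from the $\mathcal C^n$ assumption on $g$) then gives $\|\tilde m_j\|_{M_1^1}\lesssim 2^{-jsk}2^{jk\frac n2}$.

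\textbf{The $L^1$--$L^\infty$ bound.} This amounts to bounding $\|\mathscr F^{-1}m_j\|_{L^\infty}$. After rescaling it equals $2^{jn}2^{-jsk}\sup_x\bigl|\int e^{i(x\eta\pm2^{jk}\tilde w_j(\eta))}\psi(\eta)\,d\eta\bigr|$. The aim is the bound $\lesssim 2^{-jk n/2}$, uniform in $x$ and $t$, which is the Littman-type Lemma~\ref{lem:Littman} announced in the introduction and proved in the appendix with reduced regularity $\mathcal C^{n+1}$ on the phase and $\mathcal C^{n}$ on the amplitude. Applying it requires the Hessian of $\tilde w_j$ to be uniformly nondegenerate on the annulus for $j$ large. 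This follows from $w=a+o(\xii^k)$ together with the derivative bounds: $H_{\tilde w_j}\to H_a$ uniformly on the annulus, since $o(\xii^k)$ with controlled derivatives interpolates to $o$ at the level of second derivatives. Here one uses the extra Lipschitz control of $a$ on $S^{n-1}$, or else argues that in our application $w-a$ has symbol-type derivative bounds decaying like $\xii^{k-|\alpha|-\varepsilon}$. For the finitely many small $j$, and on the region where nondegeneracy might fail, the trivial bound $\|\mathscr F^{-1}m_j\|_{L^\infty}\le\|m_j\|_{L^1}$ suffices since only a bounded frequency range is involved. The hypothesis $s>-n/2$ ensures the localized pieces are well defined and that the interpolation exponents are consistent.

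\textbf{Main obstacle.} The hardest step is the uniform stationary phase estimate in the $L^1$--$L^\infty$ bound. Uniformity in both $t$ and $j$ must hold, the phase is only a perturbation of the homogeneous $a$, and the regularity is limited. I would first establish uniform nondegeneracy of $H_{\tilde w_j}$ for $j\ge j_0$ independent of $t$. Then I would invoke Lemma~\ref{lem:Littman}, whose constants depend only on the $\mathcal C^{n+1}$ norms of the phase, the $\mathcal C^n$ norm of the amplitude, and a lower bound on $|\det H|$.
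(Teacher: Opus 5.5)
Your proposal is correct and follows essentially the paper's own route: dyadic localization, the trivial $M_2^2$ bound, Bernstein's inequality for $M_1^1$, and the Littman-type Lemma~\ref{lem:Littman} after rescaling with $\tau=2^{jk}$ for $M_1^\infty$, followed by interpolation and summation using $d(p,q)<s$. One small correction: at the regularity actually available ($g$ only $\mathcal C^n$), Lemma~\ref{lem:Littman} gives \eqref{eq:New Littman} with a loss $\log(e+2^{kj})\approx j$, so your $M_1^\infty$ bound should carry a factor $j$, hence $j^{\frac1p-\frac1q}$ after interpolation. This factor is harmless in the sum because $d(p,q)<s$ is strict.
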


The assumption $s>-n/2$ is due to the fact that $d(p,q)>-n/2$, for any $p,q$ and $k$ (see~\eqref{eq: d(p,q)}). Lemma \ref{lem:high} is similar to Lemma 3 in \cite{DAL25b}, but here we consider the couple $(p,q)$ not restricted in the conjugate line.
 Moreover, the proof of Lemma~\ref{lem:high} is based on the version of Littman’s lemma \ref{lem:Littman}, where the phase function $w$ is assumed to have only $\mathcal C^{n+1,1}$ regularity and the amplitude $v$ is assumed to have $\mathcal C^n$ regularity. As a consequence of these assumptions, we produce a decay $\tau^{-\frac{n}2}\log\tau$ for $\tau\gg1$. The logarithmic loss has no influence on our estimates, since we use this decay on dyadic level, where the convergence of a series is independent of a loss smaller than any power (see the proof of Lemma~\ref{lem:high}). Therefore, it is a natural choice for us to reduce the regularity of the phase function to $\mathcal C^{n+1,1}$ and the regularity of the amplitude function to $\mathcal C_c^n$.
We postpone the proof of Lemma \ref{lem:high} to \textsection \ref{sec:osc}.\\

We are now ready to prove Theorem \ref{thm:hyp}

\begin{proof}[Proof of Theorem \ref{thm:hyp}]

First of all, we notice that $\chi\,|\xi|^\ell\,\sinc w(\xi)$ is in $M_p^q$ for any $\chi\in\mathcal C_c^\infty$ and $1 \leq p \leq q \leq \infty$. Indeed, since
\[|\partial_\xi^{\alpha} \sinc w(t,\xi) | \leq C w(t,\xi)^2\,\xii^{-|\alpha|} \leq C_1\, \xii^{2k-|\alpha|},\]
for $|\alpha|\geq 1$, we get that
\[ \chi\,\xii^{\ell}\,\sinc w(t,\xi)\in M_q^q,\]
by applying Theorem~\ref{Thm_Mik-Hor} if $q \in (1,\infty)$ or Lemma \ref{lem:HF} in the case $q=1,\infty$.

Then, by Lemma~\ref{lem:high} with $s=1-\ell/k$, we get
\begin{align}\label{eq:est_osc_mult2}
 \|\xii^\ell\,\sinc w(t,\xi) \|_{M_p^q}\leq C,
\end{align} 
provided that $d(p,q)+\ell/k<1$.

We have to consider the following quantity
\begin{align*}
    \| (-\Delta)^{\frac{\ell}2}\,u(t,\cdot)\|_{L^q}= \|(-\Delta)^{\frac{\ell}2}\,E(t)\ast u_1 \|_{L^q}.
\end{align*}

If $d(p,q)+\ell/k< 1$, using the change of variable $\xi t^{\frac1k}\mapsto \xi$, so that $a(\xi)t\mapsto a(\xi)$ in view of the $k$-homogeneity assumption, we get
\[ \begin{split}
\|(-\Delta)^{\frac{\ell}2}\,E(t)\ast u_1 \|_{L^q}
& \leq \|e^{-b(\xi)\frac{t}2}\|_{M_q^q}\,t\,\|\xii^\ell\,\sinc (w(\xi)t)\|_{M_p^q}\,\|u_1\|_{L^p}\\
& \leq \|e^{-b(\xi)\frac{t}2}\|_{M_q^q}\,\|\xii^\ell\,\sinc (w(\xi,t)\|_{M_p^q}\,\|u_1\|_{L^p}\,t^{1-\frac{n}k\left(\frac1p-\frac1q\right)-\frac\ell{k}}\,\\
    & \leq C\,t^{1-\frac{n}k\left(\frac1p-\frac1q\right)- \frac\ell{k} }\,\,\|u_1\|_{L^p},
\end{split}\]

where we used \eqref{eq:est_osc_mult2} and Lemma~\ref{lem:high0}, in particular, \eqref{eq:hest}.

We now assume $d(p,q)+\ell/k\geq 1$. Then
\[ \begin{split}
\|(-\Delta)^{\frac{\ell}2}\,E(t)\ast u_1 \|_{L^q}
& \leq\|\mathscr{F}^{-1}(|\xi|^{k(s+\ell/k-1)}e^{-b(\xi)\frac{t}2}) \ast u_1\|_{L^p}\,\times \\
&\qquad \qquad\times t\,\|\xii^\ell\,|\xi|^{-k(s+\ell/k-1)}\,\sinc (w(\xi)t)\|_{M_p^q}\\
& \leq \|\mathscr{F}^{-1}(|\xi|^{k(s+\ell/k-1)}e^{-b(\xi)\frac{t}2}) \ast u_1\|_{L^p}\,t^{s-\frac{n}k\left(\frac1p-\frac1q\right)}.
\end{split}\]
In order to estimate the exponential term, 
we use Lemma \ref{lem:high0} to get
\begin{align*}
\|\mathscr{F}^{-1}(|\xi|^{k(s+\ell/k-1)}&e^{-b(\xi)\frac{t}2}) \ast u_1\|_{L^p}\\
& \qquad\leq e^{-ct}   \|u_1\|_{\dot{H}^{k(s+\ell/k-1),p}}                        + (1+t)^{-\frac \ell{2\delta}- \frac k{2\delta}(s-1)}\|u_1\|_{L^p}.
\end{align*}
This concludes the proof of Theorem~\ref{thm:hyp}.
\end{proof}

\subsection{Proof of Lemma~\ref{lem:high0}}
In this subsection, we prove Lemma~\ref{lem:high0}, using Lemma \ref{lem:LF} and \ref{lem:HF}, stated in \textsection\ref{sec:App}.
\begin{proof}

Estimate~\eqref{eq:hestlow} directly follows by using Lemma \ref{lem:LF} with $\theta=2\delta$ and $\eta=s \delta$. Let us prove~\eqref{eq:hesthigh}. We first prepare suitable estimates on the derivatives of our multipliers.

If $\xi\in\supp(1-\chi)$, we may estimate
\begin{equation}\label{eq:hhigh}
|\partial_\xi^\alpha e^{-b(\xi)t} | \leq C\,\xii^{-\varepsilon-|\alpha|}\,e^{-ct},
\end{equation}
for any $|\alpha|=1,\ldots,n+1$.  
 Having in mind~\eqref{eq:hhigh}, let $f(t,\xi)=e^{ct}\,(1-\chi)\,e^{-t b(\xi)}$. Then, we may apply Lemma \ref{lem:HF}, getting $\|f(t,\cdot)\|_{M_1^1}\leq C$, with $C>0$ independent of $t$. Therefore,
\[ \|(1-\chi)\,e^{-tb}\|_{M_1^1} = e^{-ct}\,\|f(t,\cdot)\|_{M_1^1} \leq C\,e^{-ct}, \]
with $C,c>0$ independent of $t$, i.e., we proved~\eqref{eq:hesthigh}. This concludes the proof.
\end{proof}
\subsection{Proof of Lemma~\ref{lem:high}}\label{sec:osc}

In this subsection, we prove Lemma~\ref{lem:high}, interpolating  $L^2-L^2$, $L^1-L^\infty$ and $L^1-L^1$ estimates at dyadic level. In particular, we use Corollary~\ref{cor:Littman} for $L^1-L^\infty$ estimates.

%
\begin{proof}
Let~$m(t,\xi)$ be as in~\eqref{eq:general}, for brevity. We consider a dyadic partition of unity~$\{\psi_j\}_{j\in \mathbb{Z}}$. 
We may assume without loss of generality that $\chi=1$ in $B(0,2^{j_0})$, so that
\[ (1-\chi)\psi_j =0, \qquad \forall j\leq j_0-1. \]
Indeed, for any $\tilde\chi\in\mathcal C_c^\infty(\R^n\setminus\{0\})$, it holds $(1-\chi)\tilde\chi \,m(t,\xi)\in\mathcal C_c^{n+1}\subset \mathscr{F}(L^1)$, with uniformly bounded derivatives with respect to $t$.

We aim to estimate
\[ \|(1-\chi)\,m(t,\xi)\|_{M_p^{p'}} \leq C\,\sum_{j=j_0}^{+\infty} \|\psi_j(\xi)\,m(t,\xi)\|_{M_p^{p'}}, \]
where we already used that $(1-\chi)\psi_j=0$ for $j\leq j_0-1$ and we put $C=\|1-\chi\|_{M_q^q}$ for $j\geq j_0$. Let $j\geq j_0$ in the following. It is clear that
\begin{equation}\label{eq:M22 estimate}
\| \psi_j m(t,\cdot)\|_{M_2^2} = \| \psi_j m(t,\cdot)\|_{L^\infty} = \| \psi_j \xii^{-sk}\,g(t,\cdot)\|_{L^\infty} \approx 2^{-kjs}.
\end{equation}
Now we want to obtain an estimate of~$\| \psi_j m(t,\xi)\|_{M_1^\infty}$, using \eqref{eq:Old Littman}.
By the change of variables~$2^{-j}\xi= \eta$, we get
\[\begin{split}
\| \psi_j m(t,\xi)\|_{M_1^\infty}
    & = \left\|\mathscr{F}^{-1}_{\xi \to x}\left( m(t,\xi) \psi_j(\xi) \right) \right\|_{L^\infty} \\
    & =2^{jn}\left\|\mathscr{F}^{-1}_{\eta \to x}\left( m(t,2^j\eta) \psi(\eta) \right) \right\|_{L^\infty} \\
    & =2^{jn-kjs}\left\|\mathscr{F}^{-1}_{\eta \to x}\left( |\eta|^{-sk}\,g_j(t,\eta)\,e^{\pm i 2^{kj}w_j(t, \eta)} \psi(\eta) \right) \right\|_{L^\infty},
\end{split}\]
where
\[
w_j(t,\eta)= 2^{-kj} w(t,2^j \eta), \qquad g_j(t,\eta)=g(t,2^j\eta).
\]
In particular, in view of~\eqref{eq:assumptions},
\[ |\partial_\eta^\alpha g_j(t,\eta)|  \leq C\,|\eta|^{-|\alpha|}, \qquad |\partial_\eta^\alpha w_j(t,\eta)|  \leq C\,|\eta|^{-|\alpha|},\]
with $C>0$ independent of $t$ and $j$. Thanks to~\eqref{eq:assumptions}, we get
\[ \lim_{j\to\infty} |w_j(t,\eta) - 2^{-kj} a(2^j\eta)|=0, \]
uniformly with respect to $|\eta|\in[1/2,2]$; hence, for a sufficiently large $j_0$, the Hessian matrix of the phase function~$w_j$ is nonsingular, that is,
\begin{align*}
|\det H_{w_j(t,\eta)}| \geq c>0\,\qquad \forall |\eta| \in [1/2,2].
\end{align*}
Then, applying~\eqref{eq:New Littman} with $\tau=2^{kj}$, we conclude that
\begin{align}\label{eq:M1inf estimate}
\| \psi_j m(t,\xi)\|_{M_1^\infty} \leq C\, 2^{jn-kjs} (1+ 2^{kj})^{-\frac n2}\, \log(e+2^{kj}) \approx C\,j\, 2^{jn\left(1-\frac{k}2\right)-kjs},
\end{align}
where, in the last step, we used $j\geq j_0\geq 0$. Using~\eqref{eq:M22 estimate},~\eqref{eq:M1inf estimate} and Riesz-Thorin interpolation theorem, we get
\begin{align}\label{eq:conj_line estimate}
\| \psi_j m(t,\xi)\|_{M_{p_0}^{p_0'}} \leq C\, j^{\frac2{p_0}-1}\, 2^{jn(2-k)\left(\frac1{p_0}-\frac12\right)-kjs}, \qquad \forall p_0\in[1,2].
\end{align}
Finally we want to obtain the~$L^1-L^1$ estimate. Let~$N=[n/2]+1$. Since
\[\begin{split}
&\big|\partial_\xi^{\alpha} g(t,\xi)\big| \leq C |\xi|^{-|\alpha|},\\
&\big|\partial_\xi^{\alpha} e^{\pm iw(t,\xi)}\big| \leq C |\xi|^{|\alpha|(k-1)},
\end{split}\]
for any $|\alpha|\leq N$, and
\[ \partial_x^\alpha \psi_j(\xi) = 2^{-j|\alpha|}\,(\partial_\xi^\alpha\psi)(2^{-j}\xi), \]
we get
\[ \big|  \partial_\xi^{\alpha}(\psi_j m(t,\xi)) \big| \leq 2^{j(-ks-|\alpha|)} (1+2^{jk|\alpha|})\approx 2^{j(-ks+|\alpha|(k-1))}, \]
for any $|\alpha|\leq N$ and $j\geq j_0$, where we used $j_0\geq0$ in the last step. Therefore,
\begin{align*}
\| \partial_\xi^{\alpha}(\psi_j m(t, \cdot))\|_{L^2} \leq C\, 2^{j\left(-ks+\frac{n}{2}+|\alpha|(k-1) \right)},
\end{align*}
for any $|\alpha|\leq N$. Using Theorem~\ref{Thm_Bernestein in}, we derive
\begin{equation}\label{eq:M11 estimate}
\begin{split}
\| \psi_j m(t, \cdot)\|_{M_{1}^{1}}
    & \leq C\| \psi_j m(t, \cdot)\|_{L^2}^{1-\frac n{2N}} \sum_{|\alpha|=N} \| \partial_\xi^\alpha(\psi_j m(t, \cdot))\|_{L^2}^\frac n{2N} \\
    & \leq C 2^{kj\frac n2-kjs}.
\end{split}\end{equation}
Using~\eqref{eq:conj_line estimate}, ~\eqref{eq:M11 estimate} and Riesz-Thorin interpolation theorem, 
we conclude that
\[
\| \psi_j m(t, \cdot)\|_{M_{p}^{q}} \leq C 2^{jn(2-k)\left(\frac1{p_0}-\frac12\right)(1-\theta)+kj\frac{n}2\theta-kjs}= C\, j^{\frac 1p-\frac 1q}\,2^{jk(d(p,q)-s)}.
\]
Assuming $d(p,q)<s$, we find
\[ \|(1-\chi)\,m(t,\xi)\|_{M_p^q} \leq C\,\sum_{j=j_0}^{+\infty}\,j^{\frac 1p-\frac 1q}\, 2^{jk(d(p,q)-s)}<\infty\,. \]
This concludes the proof.

\end{proof}

\section{Proof of Theorems \ref{thm:glob_ex} and \ref{thm:glob_ex_fuj}} \label{sec:nonlinear}


Let $E(t,\cdot)$ be the fundamental solution to~\eqref{eq:CP:nonlinear} with $f=0$, that is, $u(t,\cdot)=E(t,\cdot)\ast u_1$. By Duhamel's principle, for a suitable space~$X$ such that $E(t,\cdot)\ast u_1$ is in~$X$, let us consider the integral operator~$N:X\to X$ defined by
\[ Nv(t,\cdot) = \int_0^t E(t-\tau)\ast B f(u(\tau,\cdot))\,d\tau. \]
Then a fixed point for the operator
\[ \Lambda: u(t,x) \mapsto \Lambda u= E(t,\cdot)\ast u_1 + Nv(t,x) \]
in~$X$ gives a solution to~\eqref{eq:CP:nonlinear} in~$X$.

Global existence of (uniquely determined) small data solutions to~\eqref{eq:CP:nonlinear} in $X$ follows by standard contraction argument, if we prove that~$\Lambda$ is a contraction on~$X$ for sufficiently small data~$u_1$ in some space.

We fix a space~$X$ with a suitable norm such that
\begin{equation}\label{eq:Xdata}
\|E(t,\cdot)\ast u_1\|_X \leq C\,\|u_1\|_{\mathcal A},
\end{equation}
for some $C$, independent of $u_1$ and we prove the following a priori estimate:
\begin{equation}\label{eq:contr}
\|Nu-Nv\|_X \leq C \|u-u\|_X\left(\|u\|_X^{\alpha-1}+\|u\|_X^{\alpha-1}\right).
\end{equation}
One of the tools we will use is the following classical lemma.
\begin{lemma}\label{lem}
Let~$\nu >-1$ and~$\mu \in \mathbb{R}$. Then we have
\begin{equation}
\int_{0}^{t}(t-s)^{\nu}\,(1+s)^{\mu}\,ds\lesssim
\begin{cases}
(1+t)^{\nu} &   \mbox{ if } \mu <-1,\\
(1+t)^{\nu}\,\log(e+t), &  \mbox{ if } \mu =-1,\\
(1+t)^{1+\nu+\mu} &  \mbox{ if }  \mu >-1.
\end{cases}
\end{equation}
%
%
Moreover, the estimate is also valid if~$(t-s)^{\nu}$ is replaced by~$(1+t-s)^{\nu}$ in the integral.
\end{lemma}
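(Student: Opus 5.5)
We prove the three cases by splitting the integral at $s=t/2$ and then treating the regimes $t\le 1$ and $t\ge 1$ separately. Throughout, $\nu>-1$ guarantees integrability of $(t-s)^\nu$ near $s=t$.

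First let $t\ge1$. On $[t/2,t]$ we have $(1+s)^\mu\approx(1+t)^\mu$ for any sign of $\mu$, since $1+s$ and $1+t$ are comparable there. Hence
\[
\int_{t/2}^t (t-s)^\nu(1+s)^\mu\,ds\approx (1+t)^\mu\,\frac{(t/2)^{\nu+1}}{\nu+1}\lesssim (1+t)^{1+\nu+\mu}.
\]
On $[0,t/2]$ we have $t-s\approx t\approx 1+t$, so $(t-s)^\nu\approx(1+t)^\nu$ regardless of the sign of $\nu$. The remaining factor is
\[
\int_0^{t/2}(1+s)^\mu\,ds,
\]
which is bounded if $\mu<-1$, is comparable to $\log(e+t)$ if $\mu=-1$, and is $\lesssim (1+t)^{1+\mu}$ if $\mu>-1$.

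We now combine the two pieces. When $\mu<-1$, the contribution from $[t/2,t]$ is $(1+t)^{1+\nu+\mu}\le (1+t)^\nu$, so the total is $\lesssim(1+t)^\nu$. When $\mu=-1$, the $[t/2,t]$ piece is $(1+t)^\nu\le(1+t)^\nu\log(e+t)$. When $\mu>-1$, both pieces are $\lesssim(1+t)^{1+\nu+\mu}$. This gives the three stated bounds for $t\ge1$.

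Next let $t\le1$. Here $(1+s)^\mu\approx1$, so the integral is comparable to $t^{\nu+1}/(\nu+1)\lesssim1$. All three right-hand sides are bounded below by a positive constant on $[0,1]$, which settles this regime.

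For the variant with $(1+t-s)^\nu$ in place of $(t-s)^\nu$, the same splitting works. On $[0,t/2]$ we still have $1+t-s\approx1+t$. On $[t/2,t]$ the integral $\int_{t/2}^t(1+t-s)^\nu\,ds$ is $\lesssim(1+t)^{\nu+1}$ when $\nu>-1$, which is the same bound as before.

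The only point requiring care is that the comparability $(t-s)^\nu\approx(1+t)^\nu$ on $[0,t/2]$ needs $t\ge1$; this is why the small-$t$ regime is handled separately rather than being a genuine difficulty.
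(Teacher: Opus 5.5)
Your proof is correct: splitting at $s=t/2$, using $1+s\approx 1+t$ on $[t/2,t]$ and $t-s\approx 1+t$ on $[0,t/2]$ for $t\geq1$, and handling $t\leq1$ trivially, gives all three cases as well as the $(1+t-s)^{\nu}$ variant. The paper does not prove this lemma itself and only refers to the literature, where this same splitting at $s=t/2$ is the standard argument.
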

Lemma~\ref{lem} has been proved by many authors, see, for instance,~\cite{Segal}.

\begin{proof}[Proof of Theorem \ref{thm:glob_ex}]
Let us define the space

\[ X\doteq\left\{u\in L^\infty([0,\infty), L^{\alpha+1}) : \ \|u\|_X<\infty\right\}, \]
where the norm~$\|\cdot\|_X$ is defined by
\begin{equation}\label{eq:Xnorm_1}
\|u\|_X \doteq \sup_{t\geq0} \left((1+t)^{\delta(\alpha)}\,\|u(t,\cdot)\|_{L^{\alpha +1}}
\right).
\end{equation}
where
\[\delta(\alpha)= \frac n2\left(\frac 1{\delta} +1-\frac k{2\delta}\right)\frac {\alpha-1}{\alpha+1}+ \left(1-\frac k{2\delta} \right)-\varepsilon\left(1-\frac k{2\delta} \right) ,\]
for a sufficiently small $\varepsilon>0$ such that $\varepsilon\leq 1-d(1+1/\alpha, 1+\alpha)$.

Using \eqref{eq:pert_fr} with $\ell=2\delta $ and $d(1+1/\alpha, 1+\alpha) \geq 1-2\delta/k$, we get
\[\begin{split}
\|u^\lin(t,\cdot)\|_{L^{\alpha+1}}
    & = \|(-\Delta)^{-\delta}(-\Delta)^{\delta}\,u^\lin(t,\cdot)\|_{L^{\alpha+1}} \\
    & \quad\leq C_1\,t^{-\delta(\alpha)}\,\left(\|u_1\|_{ L^{1+\frac 1 \alpha}}+\|u_1\|_{\dot H^{-2\delta, 1+\frac 1 \alpha}}\right)\leq C_1\,t^{-\delta(\alpha)}\varepsilon_0.
\end{split}
\]
On the other hand for $t \leq 1$, since $\alpha \leq (n+k)/(n-k)_+$ and $d< 1$, using \eqref{eq:pert_fr} with $\ell=0$, we get
\begin{equation*}
\|u^\lin(t,\cdot)\|_{L^{\alpha+1}} \leq C\, \|u_1\|_{L^{1+\frac 1\alpha}}.
\end{equation*}
Then, we immediately derive~\eqref{eq:Xdata}, so we may now prove~\eqref{eq:contr} for any~$u,v\in X$. \\

From \eqref{eq:Xnorm_1}, for any~$u\in X$ and for any~$\tau\geq0$, it holds
\begin{align}
\label{eq:estuX}
\| u(\tau,\cdot)\|_{L^{\alpha +1}}
    & \leq (1+\tau)^{-\delta(\alpha)} \|u\|_X,
\end{align}

Using Lipschitz-condition on $f$, H\"older inequality and \eqref{eq:estuX}, we find that
\begin{align}\label{eq:estfX}
\begin{split}
 \|(f(u)-f(v))(\tau,\cdot)\|_{L^{1+\frac1\alpha}}
   & \lesssim \|(u-v)\left(|u|^{\alpha-1}+|v|^{\alpha-1}\right)(\tau,\cdot)\|_{L^{1+\frac1\alpha}}\\
   & \quad \lesssim \|(u-v)(\tau,\cdot)\|_{L^{\alpha+1}}\left(\|u(s,\cdot)\|_{L^{\alpha+1}}^{\alpha-1}+\|v(\tau,\cdot)\|_{L^{ \alpha+1}}^{\alpha-1}\right)\\
   &\quad \lesssim (1+\tau)^{-\alpha \delta(\alpha)}\,\|u-v\|_X\,\big(\|u\|_X^{\alpha-1}+\|v\|_X^{\alpha-1}\big).
\end{split}
\end{align}

We consider the case $t \geq 2$, since $t<2$ is trivial.
Then, applying~Corollary \ref{cor:hyp} with $s=d+\varepsilon$, $d < 1$ and \eqref{eq:estfX}, we derive 
\[ \begin{split}
&\| (Nu-Nv)(t,\cdot)\|_{L^{\alpha+1}}\\
    & \qquad\lesssim \int_0^{t-1} (t-\tau)^{-\delta(\alpha)} \|(f(u)-f(v))(\tau,\cdot)\|_{L^{1+\frac1\alpha}}\,d\tau\\
    &\qquad \qquad + \int_{t-1}^t (t-\tau)^{-\frac n2 \frac{\alpha-1}{\alpha+1}+\varepsilon } \|(f(u)-f(v))(\tau,\cdot)\|_{L^{1+\frac1\alpha}}\,d\tau\\
   & \qquad \lesssim \|u-v\|_X\,\big(\|u\|_X^{\alpha-1}+\|v\|_X^{\alpha-1}\big)\,\int_0^{t-1} (t-\tau)^{-\delta(\alpha)}\,(1+\tau)^{-\alpha \,\delta(\alpha)} \,d\tau\\
   &\qquad \qquad + (1+t)^{-\alpha \,\delta(\alpha)}  \|u-v\|_X\,\big(\|u\|_X^{\alpha-1}+\|v\|_X^{\alpha-1}\big)\,,
\end{split}
\]
since $\alpha< (n+2)/(n-2)_+$. From \eqref{eq: strauss exp}, we have
\[ \frac1\alpha<   \delta(\alpha) < 1, \]
so that, using Lemma \ref{lem}, we get
\[\int_0^{t-1} (t-\tau)^{-\delta(\alpha)}\,(1+\tau)^{-\alpha \,\delta(\alpha)} \,d\tau \leq C(1+t)^{-\delta(\alpha)}.\]
This concludes the proof.
\end{proof}

We now move to the proof of Theorem \ref{thm:glob_ex_fuj}.

\begin{proof}[Proof of Theorem \ref{thm:glob_ex_fuj}]
Let us define the space

\[ X\doteq\left\{u\in L^\infty([0,\infty), L^{\alpha}) : \ \|u\|_X<\infty\right\}, \]
where the norm~$\|\cdot\|_X$ is defined by
\begin{equation}\label{eq:Xnorm_1_fuj}
\|u\|_X \doteq \sup_{t\geq0} \left((1+t)^{\delta(\alpha)}\,\|u(t,\cdot)\|_{L^{\alpha}}
\right),
\end{equation}
where
\[  \delta(\alpha)=\frac n{2\delta}  \left(1-\frac 1{\alpha} \right)-\frac n{\alpha}\left(1-\frac k{2\delta} \right) +\left( \frac n2+1 \right)\left(1-\frac k{2\delta} \right)-\ep \left(1-\frac k{2\delta}\right) \]
for a sufficiently small $\varepsilon>0$  such that $\varepsilon\leq 1-d(1,\alpha) $.
For $t \geq 1$, using \eqref{eq:pert_fr} with $\ell=2$ and $d=d(1,\alpha) \geq 1-2\delta/k$, we get
\[\begin{split}
\|u^\lin(t,\cdot)\|_{L^{\alpha}}
    & = \|(-\Delta)^{-\delta}(-\Delta)^{\delta}\,u^\lin(t,\cdot)\|_{L^{\alpha}} \\
    & \leq C \,t^{-\delta(\alpha)}\,\|(-\Delta)^{-\delta}\,u_1\|_{L^{1}}\\
    & \leq C_1\,t^{-\delta(\alpha)}\,\left(\|u_1\|_{ L^{1}}+\|u_1\|_{\dot H^{-2\delta, 1}}\right)\leq C_1\,t^{-\delta(\alpha)}\varepsilon_0.
\end{split}
\]
On the other hand for $t \leq 1$, since $\alpha \leq n/(n-k)_+$, we get
\begin{equation*}
\|u^\lin(t,\cdot)\|_{L^{\alpha}} \leq C\,\|u_1\|_{L^1}.
\end{equation*}
Then, we immediately derive~\eqref{eq:Xdata}, so we may now prove~\eqref{eq:contr} for any~$u,v\in X$. \\

From \eqref{eq:Xnorm_1_fuj}, for any~$u\in X$ and for any~$\tau\geq0$, it holds
\begin{align}
\label{eq:estuX_fuj}
\| v(\tau,\cdot)\|_{L^{\alpha}}
    & \leq (1+\tau)^{-\delta(\alpha)} \|v\|_X,
\end{align}
As in the proof of Theorem \ref{thm:glob_ex}, 
let $t \geq 2$.
Using Lipschitz-condition on $f$, H\"older inequality and \eqref{eq:estuX_fuj}, we find that
\begin{align}\label{eq:estfX_fuj}
\begin{split}
\|(f(u)-f(v))(\tau,\cdot)\|_{L^1}
   & \lesssim \|(u-v)\left(|u|^{\alpha-1}+|v|^{\alpha-1}\right)(\tau,\cdot)\|_{L^{1}}\\
   & \quad\lesssim \|(u-v)(\tau,\cdot)\|_{L^{\alpha}}\left(\|u(s,\cdot)\|_{L^{\alpha}}^{\alpha-1}+\|v(\tau,\cdot)\|_{L^{ \alpha}}^{\alpha-1}\right)\\
   &\quad \lesssim (1+\tau)^{-\alpha \delta(\alpha)}\,\|u-v\|_X\,\big(\|u\|_X^{\alpha-1}+\|v\|_X^{\alpha-1}\big).
\end{split}
\end{align}
Then, applying~Corollary \ref{cor:hyp}, and \eqref{eq:estfX_fuj}, since $d <1$, we derive
\[ \begin{split}
&\| (Nu-Nv)(t,\cdot)\|_{L^{\alpha}}\\
    &\qquad \lesssim \int_0^{t-1} (t-\tau)^{-\delta(\alpha)}\|(f(u)-f(v))(\tau,\cdot)\|_{L^1}\,d\tau\\
  &\qquad  \qquad +\int_{t-1}^t (t-\tau)^{-n\left(\frac 12 - \frac 1\alpha \right)+\varepsilon}\|(f(u)-f(v))(\tau,\cdot)\|_{L^1}\,d\tau\\  
&\qquad     \lesssim \|u-v\|_X\,\big(\|u\|_X^{\alpha}+\|v\|_X^{\alpha-1}\big)\,\int_0^t (t-\tau)^{-\delta(\alpha)}\,(1+\tau)^{-\alpha \,\delta(\alpha)} \,d\tau\\
&\qquad \qquad +
(1+t)^{-\alpha \,\delta(\alpha)} \|u-v\|_X\,\big(\|u\|_X^{\alpha}+\|v\|_X^{\alpha-1}\big)\,,
\end{split}
\]
where we used $\alpha < 2n/(n-2)_+.$
From \eqref{eq: fuj exp}, we have
\[ \frac1\alpha<   \delta(\alpha) < 1, \]
so that, using Lemma \ref{lem}, we get
\[\int_0^{t-1} (t-\tau)^{-\delta(\alpha)}\,(1+\tau)^{-\alpha \,\delta(\alpha)} \,d\tau \leq C(1+t)^{-\delta(\alpha)}.\]
This concludes the proof.
\end{proof}

\section*{Acknowledgement}
The second author is supported by the European Union ERC CONSOLIDATOR GRANT 2023 GUnDHam, Project Number: 101124921. Views
and opinions expressed are however those of the author only and do not necessarily reflect those of the European Union or the
European Research Council. Neither the European Union nor the granting authority can be held responsible for them.

The first author is supported by PRIN 2022 ``Anomalies in partial differential equations and applications'' CUP H53C24000820006.

Both authors are members of INdAM-GNAMPA group.

\appendix
\section{Tools from Fourier Analysis} \label{sec:App}

We first state the well-know Mikhlin-H\"ormander multiplier theorem.
\begin{theorem}\label{Thm_Mik-Hor}
Let~$q\in (1,\infty)$ and~$k = [n/2] + 1$. Suppose that~$m \in \mathcal{C}^k(\Rn\setminus \{0\})$ and
\begin{align*}
\big| \partial^\beta_\xi m(\xi)\big|\leq C|\xi|^{-|\beta|},\quad |\beta|\leq k.
\end{align*}
Then~$m \in M_q^q$.
\end{theorem}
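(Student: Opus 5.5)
The statement is the classical Mikhlin--H\"ormander multiplier theorem. I would prove it by Calder\'on--Zygmund theory: first establish $L^2$ boundedness, then a H\"ormander-type integral condition on the kernel, then deduce weak $(1,1)$, interpolate, and dualize.

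\emph{$L^2$ bound and dyadic decomposition.} For $q=2$ the claim is immediate from Plancherel, since the hypothesis with $\beta=0$ gives $\|m\|_{L^\infty}\le C$. Next I take a dyadic partition of unity $\{\psi_j\}_{j\in\mathbb Z}$ with $\psi_j(\xi)=\psi(2^{-j}\xi)$, as in the proof of Lemma~\ref{lem:high}, and set $m_j=\psi_j m$ and $K_j=\mathscr F^{-1}m_j$. The rescaled function $m_j(2^j\cdot)$ is supported in the annulus $\{1/2\le|\eta|\le2\}$, and by the hypothesis its derivatives of order $\le k$ are bounded uniformly in $j$.

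\emph{Kernel estimates.} The goal is the H\"ormander condition
\[
\sup_{y\neq0}\int_{|x|>2|y|}|K(x-y)-K(x)|\,dx\le C,\qquad K=\sum_j K_j .
\]
By Plancherel and the derivative bounds,
\[
\| |x|^{k} K_j\|_{L^2}\lesssim 2^{j(n/2-k)},\qquad \|K_j\|_{L^2}\lesssim 2^{jn/2},
\]
and the same bounds hold for $\nabla K_j$ with an extra factor $2^j$. Cauchy--Schwarz then gives
\[
\int_{|x|>R}|K_j|\lesssim (2^jR)^{n/2-k},
\]
where we use $k>n/2$, which holds because $k=[n/2]+1$. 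Similarly, splitting the $L^2$ norm of $\nabla K_j$ near and far from the origin at scale $2^{-j}$ yields $\|\nabla K_j\|_{L^1}\lesssim 2^j$. Summing over $j$ with $2^j\le|y|^{-1}$ using the gradient bound (mean value theorem) and over $2^j>|y|^{-1}$ using the tail bound, both geometric series converge to a constant independent of $y$.

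\emph{Conclusion.} The Calder\'on--Zygmund decomposition now gives weak $(1,1)$. Marcinkiewicz interpolation with the $L^2$ bound yields $m\in M_q^q$ for $1<q\le2$. For $2<q<\infty$ I argue by duality, noting that $\bar m(-\xi)$ satisfies the same hypotheses.

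The main obstacle is the kernel estimate, since only $k=[n/2]+1$ derivatives are available. This is exactly why I use $L^2$ weighted bounds through Plancherel rather than pointwise bounds: the condition $k>n/2$ is precisely what makes the Cauchy--Schwarz tail integral converge.
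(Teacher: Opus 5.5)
Your proof is correct. The paper gives no proof of its own to compare it with: the statement is quoted in the appendix as the well-known Mikhlin--H\"ormander theorem and used as a black box, to get $\chi\,\xii^{\ell}\sinc w(t,\xi)\in M_q^q$ for $q\in(1,\infty)$ in the proof of Theorem~\ref{thm:hyp}.

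What you wrote is essentially H\"ormander's original argument, and the exponents all check out:
\begin{itemize}
\item $\|\partial^\beta m_j\|_{L^2}\lesssim 2^{j(n/2-|\beta|)}$ for $|\beta|\le k$ gives the weighted $L^2$ kernel bounds.
\item The Cauchy--Schwarz tail estimate uses exactly $2k>n$, since $\int_{|x|>R}|x|^{-2k}\,dx<\infty$ precisely in that case.
\item The two series in the H\"ormander condition are geometric, with ratios $2^{-1}$ and $2^{n/2-k}$.
\end{itemize}
Your dyadic kernel bounds use the same mechanism as Theorem~\ref{Thm_Bernestein in}, which the paper uses for the dyadic $L^1$--$L^1$ estimate in Lemma~\ref{lem:high}. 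Indeed, your bound $\|\nabla K_j\|_{L^1}\lesssim 2^j$ is just that inequality applied to $\xi\,\psi_j m$.

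Three small points to tighten if you write it out in full.

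First, avoid working directly with $K=\sum_j K_j$. That sum converges only in $L^1_{\mathrm{loc}}(\R^n\setminus\{0\})$, and you would have to justify that $T_m f(x)=\int K(x-y)f(y)\,dy$ off the support of $f$. Instead, run the argument for the truncations $m^{(N)}=\sum_{|j|\le N}\psi_j m$. Each kernel $K^{(N)}$ lies in $L^1$, by your own local $L^2$ and tail bounds. Your estimates give the $L^2$ bound and the H\"ormander condition uniformly in $N$, since the relevant sums are sub-sums of the same geometric series. The bound then passes to $m$: for $f\in\mathcal S$, $m^{(N)}\hat f\to m\hat f$ in $L^2$ by dominated convergence, and Fatou's lemma applies along an a.e.\ convergent subsequence.

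Second, the multiplier of the adjoint of $T_m$ is $\bar m$, and that of the transpose under the bilinear pairing is $m(-\cdot)$; neither is $\bar m(-\cdot)$. This is harmless, since all of these satisfy the same symbol estimates.

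Third, when $k$ is odd, $|x|^k$ is not a polynomial. The weighted bound should therefore be read through $|x|^k\lesssim\sum_{|\beta|=k}|x^\beta|$, together with Plancherel applied to each $x^\beta K_j$.
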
 

The following result, usually called Berstein's inequality, is useful to prove $L^1-L^1$ estimates, see~\cite[Theorem 1.2]{Sjostrand}.
\begin{theorem}\label{Thm_Bernestein in}
Let~$N > n/2$ and assume~$m\in H^N$. Then~$\mathscr{F}^{-1}m \in L^1$ and there exists a constant~$C>0$ such that
\begin{align*}
\|\mathscr{F}^{-1}m\|_{L^1}\leq C\|m\|_{L^2}^{1-\frac n{2N}}\|D^N m\|_{L^2}^{\frac n{2N}}. 
\end{align*}
\end{theorem}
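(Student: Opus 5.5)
The plan is to estimate $\|\mathscr{F}^{-1}m\|_{L^1}$ by splitting physical space into a ball and its complement, using Plancherel on each piece, and then optimizing the radius of the ball. Let $f=\mathscr{F}^{-1}m$; since $m\in L^2$, we have $f\in L^2$ and $\|f\|_{L^2}=(2\pi)^{-n/2}\|m\|_{L^2}$ with the normalization of Notation~\ref{not:F}. Fix $R>0$, to be chosen at the end.

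\emph{Inner region.} By the Cauchy--Schwarz inequality,
\[ \int_{|x|<R}|f(x)|\,dx \leq |B(0,R)|^{\frac12}\,\|f\|_{L^2} \leq C\,R^{\frac n2}\,\|m\|_{L^2}. \]

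\emph{Outer region.} For every multi-index $\alpha$ with $|\alpha|=N$, the function $x^\alpha f$ is, up to a unimodular constant, the inverse Fourier transform of $\partial_\xi^\alpha m$. Since $m\in H^N$, Plancherel gives $x^\alpha f\in L^2$ with $\|x^\alpha f\|_{L^2}\leq C\|\partial_\xi^\alpha m\|_{L^2}$. By the elementary bound $|x|^{2N}\leq C\sum_{|\alpha|=N}|x^\alpha|^2$ we get $\||x|^N f\|_{L^2}\leq C\|D^N m\|_{L^2}$. Then Cauchy--Schwarz with weight $|x|^{-N}$ yields
\[ \int_{|x|\geq R}|f|\,dx \leq \Big(\int_{|x|\geq R}|x|^{-2N}dx\Big)^{\frac12}\,\||x|^Nf\|_{L^2} \leq C\,R^{\frac n2-N}\,\|D^Nm\|_{L^2}. \]
The integral of $|x|^{-2N}$ over $\{|x|\geq R\}$ converges precisely because $2N>n$; this is the only place where the hypothesis $N>n/2$ is used.

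\emph{Optimization.} Adding the two bounds shows $f\in L^1$ with
\[ \|f\|_{L^1}\leq C\big(R^{\frac n2}\|m\|_{L^2}+R^{\frac n2-N}\|D^Nm\|_{L^2}\big) \quad\text{for every } R>0. \]
If $D^Nm\neq0$, take $R=(\|D^Nm\|_{L^2}/\|m\|_{L^2})^{1/N}$, which balances the two terms. Each term then equals $\|m\|_{L^2}^{1-\frac n{2N}}\|D^Nm\|_{L^2}^{\frac n{2N}}$, which is the claimed inequality. In the degenerate case $D^Nm=0$, $m$ is a polynomial lying in $L^2$, hence $m=0$ and the inequality is trivial (alternatively, let $R\to0$ in the bound above).

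I expect no step to be a genuine obstacle. The only points needing care are the weighted Plancherel identity for $x^\alpha f$ when $m$ has merely $H^N$ regularity rather than being Schwartz (handle this by density, or by interpreting $\partial_\xi^\alpha m$ distributionally), and the convergence condition $N>n/2$ noted above.
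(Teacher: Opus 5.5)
Your proof is correct. The paper gives no proof of this statement: it simply quotes it from Sj\"ostrand's Theorem~1.2 and uses it as a black box in the $L^1-L^1$ dyadic estimate \eqref{eq:M11 estimate}. So your argument supplies a self-contained proof of something the paper only cites, rather than following or departing from a proof in the text.

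The argument you give is the standard one: split at $|x|=R$, apply Cauchy--Schwarz with Plancherel on the inner ball, apply the weight $|x|^{-N}$ together with $\mathscr{F}(x^\alpha f)=i^{|\alpha|}\partial_\xi^\alpha \hat f$ on the exterior, and optimize in $R$. The computations check out with the paper's normalization. In particular the exterior bound is $R^{\frac n2-N}$, and the choice $R=(\|D^Nm\|_{L^2}/\|m\|_{L^2})^{1/N}$ makes both terms equal to $\|m\|_{L^2}^{1-\frac n{2N}}\|D^Nm\|_{L^2}^{\frac n{2N}}$.

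What your version buys over the citation is transparency. It shows exactly where $N>n/2$ enters. It also shows that $C$ depends only on $n$ and $N$, which is what the paper needs, since the bound is applied to $\psi_j m(t,\cdot)$ and must be uniform in $j$ and $t$.

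The density caveat you raise is unnecessary. The identity $\mathscr{F}(x^\alpha f)=i^{|\alpha|}\partial_\xi^\alpha\hat f$ holds in $\mathcal S'$ for every tempered distribution. Hence $m\in H^N$ gives $x^\alpha f\in L^2$ directly, for $|\alpha|=N$.
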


For the estimates of the dissipative multiplier at low frequencies, we state the following lemma.

\begin{lemma}[see~Lemma 3.2 in~\cite{DAE25}]\label{lem:LF}
Let $g\in\mathcal C^{n+1}(\R^n\setminus\{0\})$ verifies
\[ b(\xi)\geq c\,\xii^{\theta},\quad |\partial_\xi^\gamma b(\xi)|\leq C\,\xii^{\theta-|\gamma|},\qquad \text{for $1\leq|\gamma|\leq n+1$,} \]
for some~$c>0$, $C>0$, $\theta>0$, and let $g\in\mathcal C^{n+1}(\R^n\setminus\{0\})$ verify
\[ |\partial_\xi^\gamma g(\xi)|\leq C\,\xii^{\eta-|\gamma|},\quad \qquad \text{for $0\leq|\gamma|\leq n+1$,}\]
for some $\eta>-n$. Then
\[ \|\mathscr{F}^{-1} (g(\xi)\,e^{-tb(\xi)})\|_{L^r}\lesssim t^{-\frac{n}\theta\left(1-\frac1r\right)-\frac\eta\theta}, \]
for any $r\in[1,\infty]$, if~$\eta>0$ and for any $r\in(1,\infty]$ such that
\begin{equation*}
n\left(1-\frac1r\right)>-\eta,
\end{equation*}
if $\eta\in(-n,0]$.
\end{lemma}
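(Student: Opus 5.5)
The statement to prove is Lemma~\ref{lem:LF}: an $L^r$ bound for $\mathscr{F}^{-1}(g\,e^{-tb})$, where $b\gtrsim|\xi|^\theta$ and $b$, $g$ satisfy symbol-type bounds of orders $\theta$ and $\eta$. The plan is a scaling argument reducing everything to $t=1$, followed by pointwise decay bounds on the kernel obtained by integrating by parts.

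\textbf{Step 1: rescaling.} Set $\xi=t^{-1/\theta}\zeta$ and define
\[ b_t(\zeta)=t\,b(t^{-1/\theta}\zeta), \qquad g_t(\zeta)=t^{\eta/\theta}g(t^{-1/\theta}\zeta). \]
These satisfy the same hypotheses with the same constants: $b_t\ge c|\zeta|^\theta$, $|\partial^\gamma b_t|\le C|\zeta|^{\theta-|\gamma|}$ and $|\partial^\gamma g_t|\le C|\zeta|^{\eta-|\gamma|}$. Writing $K_t=\mathscr{F}^{-1}(g\,e^{-tb})$, the change of variables gives
\[ K_t(x)=t^{-\frac n\theta-\frac\eta\theta}\,\tilde K_t(t^{-1/\theta}x), \qquad \tilde K_t=\mathscr{F}^{-1}(g_t e^{-b_t}). \]
Taking $L^r$ norms,
\[ \|K_t\|_{L^r}=t^{-\frac n\theta(1-\frac1r)-\frac\eta\theta}\,\|\tilde K_t\|_{L^r}. \]
So it suffices to bound $\|\tilde K\|_{L^r}$ uniformly over the class of symbols $(g,b)$ satisfying the hypotheses.

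\textbf{Step 2: pointwise bounds for $t=1$.} Since $\eta>-n$ and $e^{-b}\le e^{-c|\zeta|^\theta}$, the function $g\,e^{-b}$ is in $L^1$. Hence $|\tilde K(x)|\le C$, which covers $|x|\le1$.

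For $|x|\ge1$, split with a cutoff $\phi(\zeta|x|)$ into the region $|\zeta|\lesssim|x|^{-1}$ and its complement.
\begin{itemize}
\item On the small region, estimate trivially: the contribution is bounded by $\int_{|\zeta|<|x|^{-1}}|\zeta|^\eta\,d\zeta\approx|x|^{-n-\eta}$.
\item On the complement, integrate by parts $n+1$ times using $x_j$-derivatives of $e^{ix\zeta}$.
\end{itemize}
For the second item one needs derivative bounds on $h=g\,e^{-b}$. By the Leibniz and Faà di Bruno formulas, together with $|\partial^\gamma b|\le C|\zeta|^{\theta-|\gamma|}$ and the fact that $|\zeta|^{m\theta}e^{-b}$ is bounded, one gets
\[ |\partial^\gamma h|\lesssim|\zeta|^{\eta-|\gamma|}e^{-c'|\zeta|^\theta}. \]
Derivatives falling on the cutoff give the same scaling. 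The integration by parts therefore produces
\[ |x|^{-n-1}\int_{|\zeta|>|x|^{-1}}|\zeta|^{\eta-n-1}e^{-c'|\zeta|^\theta}\,d\zeta\lesssim|x|^{-n-1}\bigl(1+|x|^{1-\eta}\bigr), \]
with a logarithmic factor when $\eta=1$.

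The exponent $-n-\eta$ may be bigger or smaller than $-n-1$. Combining both regions gives
\[ |\tilde K(x)|\lesssim|x|^{-n-\min\{\eta,1\}}, \]
up to a log in the borderline case. If $\eta\ge1$ this is not better than $|x|^{-n-1}$, which is still integrable; the log is harmless because the exponent is strictly less than $-n$.

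\textbf{Step 3: conclusion.}
\begin{itemize}
\item If $\eta>0$, the bound is integrable to every power $r\in[1,\infty]$, so $\tilde K\in L^r$ uniformly.
\item If $\eta\in(-n,0]$, then $|\tilde K|\lesssim|x|^{-n-\eta}$, which lies in $L^r(|x|>1)$ exactly when $r(n+\eta)>n$, i.e. $n(1-\frac1r)>-\eta$. Combined with boundedness near the origin, this gives the claim.
\end{itemize}
The case $r=\infty$ is simply the $L^1$ bound on $g\,e^{-b}$.

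\textbf{Main obstacle.} The delicate point is the Faà di Bruno bookkeeping for $\partial^\gamma e^{-b}$ near $\zeta=0$. Near the origin, $b$ need not be smooth and only the bounds $|\partial^\gamma b|\le C|\zeta|^{\theta-|\gamma|}$ are available. One must check that each product $\prod_i\partial^{\gamma_i}b$ is bounded by $|\zeta|^{m\theta-|\gamma|}$ and that $|\zeta|^{m\theta}$ is dominated by $1$ for small $|\zeta|$, so the total is $\lesssim|\zeta|^{-|\gamma|}$. For large $|\zeta|$ this power is absorbed by the exponential. The only smoothness used is $\mathcal C^{n+1}$ away from $0$, which is exactly the number of integrations by parts performed.
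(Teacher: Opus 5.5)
The paper contains no proof of this lemma; it quotes it from Lemma~3.2 of \cite{DAE25}. So there is no in-text argument to compare yours with, and on its own terms your proof is correct and complete.

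The rescaling $b_t(\zeta)=t\,b(t^{-1/\theta}\zeta)$, $g_t(\zeta)=t^{\eta/\theta}g(t^{-1/\theta}\zeta)$ preserves all hypotheses with the same constants. This reduces the claim to a bound on $\mathscr{F}^{-1}(g\,e^{-b})$ depending only on $n,\theta,\eta,c,C$. Your split at $|\zeta|\sim|x|^{-1}$, together with $n+1$ integrations by parts (no boundary terms, since the cut-off removes the origin and $\partial^\gamma(g e^{-b})$ decays like $|\zeta|^{\eta-|\gamma|}e^{-c'|\zeta|^\theta}$), gives
\[
|\tilde K(x)|\lesssim\langle x\rangle^{-n-\min\{\eta,1\}},
\]
with a logarithm at $\eta=1$. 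This yields exactly the stated ranges of $r$.

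The number $n+1$ is exactly what the case $r=1$, $\eta>0$ requires. With only $n$ integrations by parts your large-frequency bound would be $|x|^{-n}$, which is not integrable.

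Two small remarks:
\begin{itemize}
\item The first ``$g\in\mathcal C^{n+1}$'' in the statement is a typo for $b\in\mathcal C^{n+1}(\R^n\setminus\{0\})$, which is the regularity you actually use.
\item Since $b$ enters only through $g\,e^{-b}$ and its derivatives, your argument needs the hypotheses on $b$ only on $\supp g$. This is what makes the lemma usable in the proof of Lemma~\ref{lem:high0} with $g=\chi\,|\xi|^{s\delta}$. There a global lower bound $b\ge c\,|\xi|^{2\delta}$ cannot hold, because $b$ is bounded.
\end{itemize}
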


The following lemma is used at low and high frequencies.

\begin{lemma}[see~Corollary 1 in~\cite{DA26}]\label{lem:HF}
    Assume that $m \in  L^\infty$ and verifies
  \[ |\partial_\xi^\alpha m(\xi)|\leq C\,| \xi |^{-|\alpha|+\varepsilon}\,\langle \xi \rangle^{-2\varepsilon}, \qquad \textit{ for } |\alpha|=n-1,n,n+1,\]
  for some $\varepsilon\in (0,1)$. Then $m \in M_1^1$.
\end{lemma}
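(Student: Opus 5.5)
The plan is to show that $K=\mathscr F^{-1}m$ is a constant multiple of $\delta_0$ plus an $L^1$ function. Then $m\in M_1^1$, since convolution with a bounded measure is bounded on $L^1$. The difficulty is that we control only the derivatives of $m$ of orders $n-1$, $n$, $n+1$, and not lower ones. So every integration by parts must land on derivatives of exactly these orders. I would therefore first move $n-1$ derivatives onto $m$ globally, and only afterwards split in frequency.

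\emph{Step 1 (global $n-1$ derivatives).} Fix $|\beta|=n-1$. By hypothesis $|\partial^\beta m(\xi)|\le C|\xi|^{\varepsilon-n+1}\langle\xi\rangle^{-2\varepsilon}$, which is locally integrable. In the sense of distributions we have $x^\beta K=c_\beta\,\mathscr F^{-1}(\partial^\beta m)$, and I will estimate this object pointwise for $x\neq0$.

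\emph{Step 2 (frequency splitting at $R=1/|x|$).} Fix $x\ne0$ and a cut-off $\phi\in\mathcal C_c^\infty$ equal to $1$ near the origin. Write $\partial^\beta m=\phi(\xi/R)\partial^\beta m+(1-\phi(\xi/R))\partial^\beta m$.
\begin{itemize}
\item For the low part, estimate the Fourier integral by the $L^1$ norm: $\int_{|\xi|\lesssim R}|\xi|^{\varepsilon-n+1}\langle\xi\rangle^{-2\varepsilon}d\xi$. This is $\lesssim R^{1+\varepsilon}$ if $R\le1$, and $\lesssim R^{1-\varepsilon}$ if $R\ge1$.
\item For the high part, integrate by parts twice more in the direction of $x$. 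This gains $|x|^{-2}$ and produces terms $\partial^{\gamma}\bigl(1-\phi(\cdot/R)\bigr)\,\partial^{\beta+\gamma'}m$ with $|\gamma|+|\gamma'|=2$. Only derivatives of $m$ of orders $n-1$, $n$, $n+1$ occur, and each derivative of the cut-off contributes a factor $R^{-1}\sim|\xi|^{-1}$ on its support. Hence every term is bounded by $|\xi|^{\varepsilon-n-1}\langle\xi\rangle^{-2\varepsilon}$ on $|\xi|\gtrsim R$. Integrating, we get $\lesssim R^{\varepsilon-1}$ if $R\le1$ and $\lesssim R^{-1-\varepsilon}$ if $R\ge1$, which after multiplying by $|x|^{-2}=R^2$ matches the low-frequency bound.
\end{itemize}
Combining the two parts, and dividing by $|x|^{n-1}$ after summing over $\beta$ (using $|x|^{n-1}\lesssim\sum_\beta|x^\beta|$), we get
\[
|K(x)|\lesssim |x|^{-n-\varepsilon}\ \text{ for } |x|\ge1,\qquad |K(x)|\lesssim |x|^{-n+\varepsilon}\ \text{ for } |x|\le1 .
\]
So the restriction of $K$ to $\R^n\setminus\{0\}$ is a function $k\in L^1(\R^n)$.

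\emph{Step 3 (the origin).} Let $T=K-k$, where $k$ is viewed as an $L^1$ distribution. Then $T$ is supported in $\{0\}$, so it is a finite combination of derivatives of $\delta_0$, and $\hat T$ is a polynomial. On the other hand $\hat T=m-\hat k$ is bounded, because $m\in L^\infty$ and $\hat k$ is continuous and bounded. A bounded polynomial is constant, so $T=c\,\delta_0$ and $K=c\,\delta_0+k$ is a bounded measure. Therefore $m\in M_1^1$.

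I expect Step 2 to be the main obstacle. One must make sure that the boundary terms created by the cut-off $\phi(\xi/R)$ never require derivatives of $m$ of order below $n-1$; this is exactly why Step 1 puts the $n-1$ derivatives on $m$ before any splitting. One must also check that the exponent $\varepsilon\in(0,1)$ gives integrability in both regimes $R\lessgtr1$. The identity $x^\beta K=c_\beta\mathscr F^{-1}(\partial^\beta m)$ is justified in $\mathcal S'$, and the pointwise formulas can be obtained by inserting a Gaussian regularization $e^{-\eta|\xi|^2}$ and letting $\eta\to0$.
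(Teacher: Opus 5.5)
The paper does not prove this lemma; it imports it from Corollary~1 of \cite{DA26}, so there is no in-paper argument to compare yours with. Your proof is correct and self-contained, and its structure is clearly what the hypotheses on the orders $n-1,n,n+1$ are designed for:
\begin{itemize}
\item the $n-1$ derivatives turn $m$ into a function with the integrable singularity $|\xi|^{\varepsilon-n+1}$ at the origin, which you use for $|\xi|\lesssim|x|^{-1}$;
\item two more derivatives give the integrable tail $|\xi|^{-\varepsilon-n-1}$, which you use for $|\xi|\gtrsim|x|^{-1}$;
\item the order-$n$ bound enters only through the cut-off cross terms;
\item $\varepsilon<1$ is what makes the power terms dominate the constant contributions from $|\xi|\sim1$ in both regimes $R\lessgtr1$.
\end{itemize}
The resulting bound $|K(x)|\lesssim\min\{|x|^{-n+\varepsilon},|x|^{-n-\varepsilon}\}$ on $\R^n\setminus\{0\}$ combines with your Step~3. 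There, a distribution supported at $0$ whose Fourier transform is a bounded polynomial is $c\,\delta_0$. This correctly accounts for the constant part of $m$ and gives $K=c\,\delta_0+k$ with $k\in L^1$.

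Two points each deserve a line in a write-up.

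First, the identity $x^\beta K=c_\beta\mathscr F^{-1}(\partial^\beta m)$ holds with the distributional derivative on all of $\R^n$, and you tacitly identify it with the pointwise one. This is true, but it needs $m\in L^\infty$ and $|\beta|=n-1<n$. The difference $T$ between the two derivatives is supported at $\xi=0$ and satisfies $\langle T,\psi(\cdot/r)\rangle=O(r)$ as $r\to0$. Indeed, both pieces are $O(r)$ by scaling: the distributional one via $\|m\|_{L^\infty}\,r^{n-|\beta|}$, the pointwise one via $r^{1+\varepsilon}$. On the other hand, if $T=\sum_\gamma c_\gamma\partial^\gamma\delta_0$, the choice $\psi=\xi^\gamma\chi$ gives $\langle T,\psi(\cdot/r)\rangle=\pm\gamma!\,c_\gamma\,r^{-|\gamma|}$. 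Hence $T=0$.

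Second, with the Gaussian regularization the derivatives may also fall on $e^{-\eta|\xi|^2}$. This is harmless, since $|\partial^\gamma e^{-\eta|\xi|^2}|\le C_\gamma|\xi|^{-|\gamma|}$ uniformly in $\eta>0$. You then get a uniform bound $\min\{|x|^{-1+\varepsilon},|x|^{-1-\varepsilon}\}$ on $\mathscr F^{-1}(e^{-\eta|\cdot|^2}\partial^\beta m)(x)$. This bound is locally integrable and bounded at infinity, so it upgrades pointwise convergence for $x\neq0$ to convergence in $\mathcal S'$. That identifies the distribution $x^\beta K$ with your pointwise formula.
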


The following result is a modified version of the Littman lemma about the stationary phase tecnique for oscillatory integrals.

\begin{lemma}\label{lem:Littman}
Let~$v\in \mathcal{C}_c^n(\Rn)$ and~$w\in\mathcal{C}^{n+1,1}(K)$, where $K$ is a compact containing an open set containing $\supp v$. Moreover, we assume that the Hessian matrix~$H_w(\xi)$ satisfies~$\det H_w(\xi)\neq 0$ on~$K$. Let $D$ be such that $\supp v\subset B(0,D)$, and let
\[ M_N = \|v\|_{\mathcal C^N} + \sum_{j=1}^N \left(1+\|H_w\|_{\mathcal C^{j-1,1}(K)}\right)^{j}\|v\|_{\mathcal C^{N-j}}. \]
Then the oscillatory integral
\[
I(\tau) = \int_{\R^n} e^{-i\tau w(\xi)}v(\xi) \, d\xi,
\]
verifies the following estimate:
\begin{equation}\label{eq:New Littman}
|I(\tau)| \leq C_n\,M_n\,D^n\,(1+\tau)^{-\frac n2}\,\log (e+ \tau),
\end{equation}
where the constant~$C_n$ depends on $n$ and on $H_w$ (more precisely, on its eigenvalues), but it is independent of the derivatives of $H_w$ and of $v$. If we strengthen the regularity assumptions to~$v\in \mathcal{C}_c^{n+1}(\Rn)$ and~$w\in\mathcal{C}^{n+2,1}$, then the following estimate holds: 
\begin{equation}\label{eq:Old Littman}
|I(\tau)| \leq C_n\,M_{n+1}\,D^n\,(1+\tau)^{-\frac n2}.
\end{equation}
\end{lemma}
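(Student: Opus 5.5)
We need to prove the Littman-type Lemma~\ref{lem:Littman}: a stationary-phase decay bound $\tau^{-n/2}\log\tau$ under reduced regularity ($v\in\mathcal C^n_c$, $w\in\mathcal C^{n+1,1}$), together with the classical bound $\tau^{-n/2}$ under $v\in\mathcal C^{n+1}_c$, $w\in\mathcal C^{n+2,1}$.

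The plan follows Littman's classical argument: localize near and away from critical points of $w$, then treat the nondegenerate critical points by a Morse-type change of variables. Since $\det H_w\neq0$ on $K$, the critical points of $\nabla w$ in $\supp v$ are isolated, hence finitely many. Their number and the size of the neighbourhoods on which $\nabla w$ is a diffeomorphism depend only on the eigenvalues of $H_w$ and on $D$.

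Next we fix a partition of unity subordinate to balls of radius $\rho\sim$ (lower bound of $|H_w^{-1}|^{-1}$)/(Lipschitz constant of $H_w$). On each ball $\nabla w$ is injective, so there is at most one critical point $\xi_0$ per ball. Covering $B(0,D)$ requires $O(D^n\rho^{-n})$ balls, which produces the factor $D^n$. We then split each localized piece dyadically with respect to $|\nabla w(\xi)|\approx|\xi-\xi_0|$. On the annulus $|\xi-\xi_0|\sim 2^{-m}$ we integrate by parts $N$ times with the operator $L=\frac{i\nabla w\cdot\nabla}{\tau|\nabla w|^2}$. Each application of $L^{*}$ gains a factor $(\tau 2^{-m})^{-1}2^{m}=\tau^{-1}2^{2m}$. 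The derivatives landing on $\nabla w$ and its powers involve derivatives of $H_w$ up to order $j-1$; this is where the weights $(1+\|H_w\|_{\mathcal C^{j-1,1}})^j\|v\|_{\mathcal C^{N-j}}$ in $M_N$ come from. Taking $N$ integrations by parts on an annulus of volume $2^{-mn}$ gives the bound $M_N\min\{2^{-mn},\,2^{-mn}(\tau^{-1}2^{2m})^{N}\}$. The innermost ball $|\xi-\xi_0|\lesssim\tau^{-1/2}$ is estimated trivially by $\tau^{-n/2}$.

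The dyadic sum is where the two regimes differ. With $N=n$ derivatives, which is exactly what $v\in\mathcal C^n$ and $w\in\mathcal C^{n+1,1}$ allow since $L^n$ needs $n$ derivatives of $\nabla w$ in the Lipschitz sense, the terms with $2^{-m}\gtrsim\tau^{-1/2}$ contribute $\sum_m 2^{-mn}(\tau^{-1}2^{2m})^{n}=\tau^{-n}\sum 2^{mn}$. This sum stays comparable to $\tau^{-n/2}$ per scale only at the top scale. Being careful, the borderline count is $2^{-mn}\tau^{-n}2^{2mn}=\tau^{-n}2^{mn}$, geometric and dominated by $2^{m}\sim\tau^{1/2}$, so giving $\tau^{-n/2}$. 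However, the terms where derivatives hit the cutoff at the critical scale lose summability, and estimating $\nabla^n$ of the amplitude only in $L^\infty$ (no room to spare) produces one term of equal size per dyadic scale. There are about $\log\tau$ such scales, which is the source of the $\log(e+\tau)$ factor. With one extra derivative ($N=n+1$, requiring $v\in\mathcal C^{n+1}$ and $w\in\mathcal C^{n+2,1}$), the sum becomes strictly geometric and yields the clean bound \eqref{eq:Old Littman}.

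The main obstacle is the bookkeeping of this dyadic integration by parts under only Lipschitz control of the top derivative of $H_w$. The last derivative must be handled through the weak derivative of a Lipschitz function, which is in $L^\infty$ by Rademacher's theorem, so the integration by parts remains legitimate. We also have to verify that $C_n$ depends only on the eigenvalues of $H_w$, which enter through $|\nabla w(\xi)|\ge c|\xi-\xi_0|$ and through $\rho$, while all higher-derivative dependence is absorbed into $M_N$. For $\tau\le1$ both estimates follow trivially from $|I|\le\|v\|_\infty|B(0,D)|$.
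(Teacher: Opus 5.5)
Your route is genuinely different from the paper's. The paper never locates critical points. It writes $|I(\tau)|^2=\int_{B(0,2\varepsilon)}J_\tau(u)\,du$ with $J_\tau(u)=\int e^{i\tau(w(x+u)-w(x))}v(x+u)\bar v(x)\,dx$, and integrates by parts in $x$ using $|\nabla_x(w(x+u)-w(x))|\ge \frac{h_0}{2}|u|$, which follows from Taylor's formula and the choice \eqref{eq:deteps}. This gives $|J_\tau(u)|\lesssim M_N(\tau|u|)^{-N}$ for $N\le n$, and it then integrates in $u$. With $N=n$ the integral $\int_{\tau^{-1}\le|u|\le2\varepsilon}(\tau|u|)^{-n}\,du\approx\tau^{-n}\log(2\varepsilon\tau)$ is borderline. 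This is the only source of the logarithm, and $N=n+1$ removes it.

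That argument is short and sees only $H_w$, so the uniformity under $w\mapsto w-\xi\cdot x/\tau$ needed in Corollary~\ref{cor:Littman} is automatic. Its cost is that it uses about twice as many derivatives as necessary. Your dyadic decomposition around the (at most one) near-critical point of each small ball is also uniform under such linear perturbations. It only uses $|\nabla w(\xi)-\nabla w(\eta)|\ge c|\xi-\eta|$ on balls of radius $\rho$, which is a property of $H_w$ alone. It is also sharper. Your own count, $\tau^{-N}\sum_{2^{2m}\le\tau}2^{m(2N-n)}$ plus $\tau^{-n/2}$ from the inner ball, is dominated by its last term as soon as $2N>n$. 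So it gives $|I(\tau)|\lesssim\tau^{-n/2}$ with no logarithm already for $N=[n/2]+1$, which yields \eqref{eq:New Littman} and \eqref{eq:Old Littman} simultaneously. No Rademacher argument is needed either, since $(L^t)^N$ involves only $N+1\le n+1$ classical derivatives of $w$.

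Consequently your explanation of the logarithm is wrong and should be deleted. Derivatives falling on the annular cutoff cost $2^m$ each, exactly like those falling on $\nabla w/|\nabla w|^2$. They are already included in the factor $\tau^{-1}2^{2m}$, so nothing loses summability at $N=n$. In your scheme a logarithm would appear only in the borderline case $2N=n$.

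Two details should also be fixed.
\begin{itemize}
\item A ball may contain no critical point while $|\nabla w|$ is arbitrarily small near its boundary. Centre the decomposition at a minimizer $\xi_0$ of $|\nabla w|$ on the closed ball and use $2|\nabla w(\xi)|\ge|\nabla w(\xi)-\nabla w(\xi_0)|\ge c|\xi-\xi_0|$.
\item The radius $\rho$, and hence the number $\approx D^n\rho^{-n}$ of balls, depends on the Lipschitz constant of $H_w$, not only on its eigenvalues, contrary to your earlier claim. The paper's $\varepsilon$ in \eqref{eq:deteps} has the same feature.
\end{itemize}
Finally, the Morse change of variables you announce at the start is never used. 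That is just as well, since under $w\in\mathcal C^{n+1,1}$ it would cost derivatives.
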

%
The proof of Lemma~\ref{lem:Littman} is given in~\textsection\ref{sec:Littman}. An immediate consequence of Lemma~\ref{lem:Littman} is the following.
\begin{corollary}\label{cor:Littman}
In the same assumptions of Lemma~\ref{lem:Littman}, the thesis holds if we replace $I(\tau)$ by
\[ \|\mathscr{F}^{-1}_{\xi\to x}(e^{-i\tau w(\xi)}v(\xi))\|_\infty. \]
\end{corollary}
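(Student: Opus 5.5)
The plan is to reduce the sup-norm of the inverse Fourier transform to the oscillatory integral $I(\tau)$ of Lemma~\ref{lem:Littman}, uniformly in the shift by $x$. Fix $x\in\R^n$ and write
\[
\mathscr{F}^{-1}_{\xi\to x}\big(e^{-i\tau w(\xi)}v(\xi)\big)(x)=(2\pi)^{-n}\int_{\R^n} e^{-i\tau\left(w(\xi)-\frac{x\cdot\xi}{\tau}\right)}v(\xi)\,d\xi ,
\]
so it equals $(2\pi)^{-n}$ times the integral $I(\tau)$ with the new phase $w_x(\xi):=w(\xi)-\tau^{-1}x\cdot\xi$, for $\tau>0$. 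For $\tau=0$, or more generally $\tau\le 1$, the estimate is trivial from $|I|\le \|v\|_{\mathcal C^0}|B(0,D)|\lesssim M_n D^n$.

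The key observation is that subtracting a linear function does not change any of the quantities on which the bound in Lemma~\ref{lem:Littman} depends. Indeed, $H_{w_x}=H_w$ on $K$, so the nondegeneracy $\det H_{w_x}\neq0$ holds with the same eigenvalues. The norms $\|H_{w_x}\|_{\mathcal C^{j-1,1}(K)}$ coincide with those of $H_w$, hence $M_N$ is unchanged. The regularity is also preserved: $w_x\in\mathcal C^{n+1,1}(K)$, respectively $\mathcal C^{n+2,1}$, because linear terms are smooth. The amplitude $v$ and the radius $D$ are untouched. Applying \eqref{eq:New Littman} (or \eqref{eq:Old Littman} under the stronger assumptions) to $w_x$ therefore gives a bound $C_nM_nD^n(1+\tau)^{-n/2}\log(e+\tau)$ that is independent of $x$. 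Taking the supremum over $x$ yields the corollary.

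The only point needing care is whether the proof of Lemma~\ref{lem:Littman} relies on quantities that do change under the linear perturbation, notably $\nabla w$. Stationary phase near critical points of $w_x$ may move, or disappear, as $x$ varies, and the claim that $C_n$ depends only on $H_w$ and the listed norms is exactly what rules out any dependence on $\nabla w$. So the argument rests on reading the statement of Lemma~\ref{lem:Littman} literally, in that $C_n$ and $M_N$ involve only $H_w$ and its derivatives. If one worried about this, the fallback would be to check in the proof that first-order information enters only through $H_w$, via a Morse-type change of variables around each critical point and integration by parts away from them, with bounds on $|\nabla w_x|^{-1}$ controlled through $|\det H_w|$. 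I do not expect this to be a genuine obstacle.
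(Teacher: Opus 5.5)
Your proposal is correct and follows the paper's proof: you write the inverse Fourier transform as $(2\pi)^{-n}I(\tau)$ with phase $w(\xi)-\tau^{-1}x\cdot\xi$, note that the Hessian (hence $M_N$ and $C_n$) is unchanged, and apply Lemma~\ref{lem:Littman} uniformly in $x$. Your worry about $\nabla w$ does not arise, because the paper's proof of the lemma only uses $b=\nabla\big(w(\xi+u)-w(\xi)\big)$, which adding a linear function leaves unchanged; in fact $|J_\tau(u)|$ is literally the same, since the linear term only contributes the unimodular factor $e^{-iu\cdot x}$.
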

\begin{proof}
Since $v$ is compactly supported, we may apply the Fourier inversion formula and obtain
\[ \mathscr{F}^{-1}_{\xi\to x}(e^{-i\tau w(\xi)}v(\xi)) = (2\pi)^{-n} I(\tau),\quad \text{with phase function}\ w_1(\xi) = w(\xi) - \xi\cdot \frac{x}\tau. \]
Due to $H_{w_1}=H_w$, we may apply Lemma~\ref{lem:Littman}, with the same constants, independent of the parameter $x$. %

This concludes the proof.
\end{proof}

\subsection{Proof of Littman-type Lemma~\ref{lem:Littman}}\label{sec:Littman}

We prove Lemma~\ref{lem:Littman}.
\begin{proof}
We follow the ideas contained in \cite[Proposition 2.5, Chapter 8]{SS}, but with some modifications. Let $\varepsilon>0$, sufficiently small to get
\begin{align}
\label{eq:suppeps}
& \supp v + \bar B(0,\varepsilon) = \{x+y: \ x\in\supp v, \ |y|\leq \varepsilon\} \subset K, \\
\label{eq:deteps}
& \varepsilon \|\nabla H_w\|_\infty \leq h_0/4,\qquad \text{where}\quad h_0=\min_{|u|=1} |H_w\cdot u|>0.
\end{align}
It is enough to show estimates \eqref{eq:New Littman} and~\eqref{eq:Old Littman} for $I(\tau)$, assuming that the support of $v$ lies in $B(x_0,\varepsilon)$ for some $x_0$ varying in the compact support of $v$, and then we will obtain the estimate for a general $v$ using a partition of unity. We want to estimate
\[\begin{split}
|I(\tau)|^2= \overline{I(\tau)} I(\tau)
    & = \int_{B(x_0,\varepsilon)} \int_{B(x_0,\varepsilon)} e^{i \tau (w(y)-w(x))}v(y)\,\overline{v}(x)\,dydx\\
    & = \int_{B(x_0,\varepsilon)} \int_{B(x_0-x,\varepsilon)} e^{i \tau (w(x+u)-w(x))}v(x+u)\,\overline{v}(x)\,dudx\\
    & = \int_{B(0,2\varepsilon)} J_\tau(u)\,du,\\
J_\tau(u)
    & = \int_{B(x_0-u,\varepsilon)} e^{i \tau (w(x+u)-w(x))}\varphi(x,u)\,dx,
\end{split}\]
where we used the change of variable $u=y-x$, we put $\varphi(x,u)=v(x+u)\,\overline{v}(x)$, and we applied Fubini-Tonelli theorem. We notice that $B(x_0-u,\varepsilon)\subset B(x_0,3\varepsilon)$. We claim that 
\begin{equation}\label{eq:claim_newLit}
| J_\tau(u)| \leq C_n\,(3\varepsilon)^n\,M_N\,(\tau |u|)^{-N}, \quad N=0,\ldots,n.
\end{equation}
We notice that
\[e^{i \tau (w(x+u)-w(x))} = L(e^{i \tau (w(x+u)-w(x))}),\]
where
\[
L= \frac1{i \tau} (a \cdot \nabla_x), \quad a=\frac{b}{|b|^2},\quad b(x,u)=\nabla_x(w(x+u)-w(x)).
\]
Due to $|u|\leq 2\varepsilon$, for any $x\in\supp v$ and $x+u\in\supp v$, we deduce that $x+tu\in K$ for any $t\in[0,1]$ thanks to~\eqref{eq:suppeps}; hence, we may apply Taylor's theorem to get
\[ |\nabla_x(w(x+u)-w(x))-H_w(x) \cdot u| \leq \|\nabla H_w\|_\infty\,|u|^2;\]
thanks to~\eqref{eq:deteps}, using $|H_w(x) \cdot u|\geq h_0\,|u|$, we obtain
\[ |b(x,u)| \geq \big(h_0-2\varepsilon \|\nabla H_w\|_\infty\big)\,|u| \geq \frac{h_0}2\,|u|, \quad \text{so that}\qquad \frac1{|b|}\leq \frac2{h_0|u|}. \]
We immediately deduce
\[|a(x,u)|\leq\frac2{h_0|u|}.\]
On the other hand, due to $w\in\mathcal C^{n+1,1}$, we get the estimate
\[ |\partial_x^\alpha b(x,u) | \leq \|\partial_x^\alpha b\|_{\mathcal C^{0,1}}\,|u| \leq \|H_w\|_{\mathcal C^{|\alpha|-1,1}(K)}\, |u|, \quad |\alpha|=1,\ldots,n. \]
Therefore,
\[ |\partial_x^\alpha a(x,u) |\leq C_n\,\frac1{|u|}\left(1 + \|H_w\|_{\mathcal C^{|\alpha|-1,1}(K)}\right)^{|\alpha|}, \qquad |\alpha|=1,\ldots,n,\]
where $C_n$ depends on $h_0$. Since
\[ L^t(f)=\frac i \tau \nabla \cdot (af) = \frac{i}\tau\,((\nabla\cdot a)f+a\cdot \nabla f),\]
by the recursive formula
\[ (L^t)^N(f)= \frac{i}\tau\,((\nabla\cdot a)(L^t)^{N-1}(f)+a\cdot \nabla (L^t)^{N-1}(f)), \]
we may estimate
\[ |(L^t)^N \varphi(x,u)| \leq C_n\,\frac1{(\tau |u|)^{N}}\,\Big(\|v\|_{\mathcal C^N} + \sum_{j=1}^N \left(1+\|H_w\|_{\mathcal C^{j-1,1}(K)}\right)^{j}\|v\|_{\mathcal C^{N-j}}\Big),\]
for $N=0,\ldots,n$. Therefore, from $x\in B(x_0,3\varepsilon)$, we get \eqref{eq:claim_newLit}. We may now estimate $|I(\tau)|$. Letting $\tau\geq\varepsilon^{-1}$, we use~\eqref{eq:claim_newLit} with $N=0$ for $u\in B(0,\tau^{-1})$ and~\eqref{eq:claim_newLit} with $N=n$ for $\tau\leq |u|\leq 2\varepsilon$. Then
\[\begin{split}
|I(\tau)|^2
    & \leq C_0\,(3\varepsilon)^n\,\|v\|_\infty\,\int_{|u|\leq \tau^{-1}}du \\
    & \qquad + C_n\,(3\varepsilon)^n\,M_n\,\int_{\tau^{-1} \leq |u|\leq 2\varepsilon} (\tau |u|)^{-n}\,du \\
    & \leq C_1\,(3\varepsilon)^n\,\left(\|v\|_\infty+M_n\right)\, \big( \tau^{-n}+ \tau^{-n} \log ( 2\varepsilon \tau) \big).
\end{split}\] 
Due to $\tau\geq\varepsilon^{-1}$, for $\varepsilon\leq1/2$, we get
\[ 0<\log ( 2\varepsilon \tau) \leq \log \tau. \]
The quantity of balls of radius $\varepsilon$ needed to cover the support of $v$ can be estimated by the measure of a ball containing $\supp v$, multiplied by $\varepsilon^{-n}$; hence, by $c_nD^n\varepsilon^{-n}$. This concludes the proof of \eqref{eq:New Littman}. In order to prove~\eqref{eq:Old Littman}, it is sufficient to use $N\leq n+1$ in all previous steps and~\eqref{eq:claim_newLit} with $N=n+1$ for $\tau\leq |u|\leq 2\varepsilon$, obtaining
\[ |I(\tau)|^2 \leq C\,(3\varepsilon)^n\,M_{w,n+2}\,K_{n+1}\,\tau^{-n}.\]

This concludes the proof
\end{proof}

\end{document}